\documentclass[12pt,reqno]{amsart}

\usepackage[english]{babel}
\usepackage[utf8]{inputenc}
\usepackage{amsmath}
\usepackage{amssymb}
\usepackage{amsthm}
\usepackage{enumerate}
\usepackage{graphicx}
\usepackage{fullpage}
\usepackage{cite}
\newtheorem{thm}{Theorem}[section]
\newtheorem{lemma}[thm]{Lemma}
\newtheorem{cor}[thm]{Corollary}

\newtheorem{defn}[thm]{Definition}

\newcommand{\ZZ}{\mathbb{Z}}
\newcommand{\B}{\mathcal B}
\newcommand{\C}{\mathcal C}
\DeclareMathOperator{\Ker}{Ker}
\DeclareMathOperator{\Img}{Im}

\title{The sandpile group of polygon rings and twisted polygon rings}

\author{Haiyan Chen}
\address{%
The School of Sciences\\
Jimei University\\
Fujian, China}
\email{chey5@jmu.edu.cn}

\author{Bojan Mohar}
\address{%
Department of Mathematics\\
Simon Fraser University\\
Burnaby, BC, Canada}
\email{mohar@sfu.ca}

\thanks{This work was done while the first author visited The Simon Fraser University. The hospitality of the hosting institution is greatly acknowledged. The visit was funded by the Fujian Provincial Education Department.}
\thanks{H.~Y.~Chen was supported by the National Natural Science Foundation of China (grant numbers
11771181, 12071180).}
\thanks{B.M.~was supported in part by the NSERC Discovery Grant R611450 (Canada), by the Canada Research Chairs program, and by the Research Project J1-8130 of ARRS (Slovenia).}
\thanks{On leave from IMFM, Department of Mathematics, University of Ljubljana.}%

\date{\today}

\begin{document}

\begin{abstract}
Let $C_{k_1}, \ldots, C_{k_n}$  be cycles with $k_i\geq 2$ vertices ($1\le i\le n$). By attaching these $n$ cycles together in a linear order, we obtain a graph called a polygon chain. By attaching these $n$ cycles together in a cyclic order, we obtain a graph, which is called a polygon ring if it can be embedded on the plane; and called a twisted polygon ring if it can be embedded on the M\"{o}bius band.  It is known that the sandpile group  of a polygon chain is always cyclic. Furthermore, there exist edge generators. In this paper, we not only show that the sandpile group of any (twisted) polygon ring can be generated by at most three edges, but also  give an explicit relation matrix among these edges. So we obtain a uniform method to compute the sandpile group  of arbitrary (twisted) polygon rings, as well as the number of spanning trees of (twisted) polygon rings. As an application, we compute the sandpile groups of several infinite families of polygon rings, including some that have been done before by ad hoc methods, such as, generalized wheel graphs, ladders and M\"{o}bius ladders.
\end{abstract}

\maketitle

\section{Introduction}

The abelian sandpile models were firstly introduced in 1987 by three physicists, Bak, Tang,
and Wiesenfeld \cite{Bak1987Self}, who studied it mainly on the integer grid graphs. In 1990, Dhar\cite{Dhar1990Self} generalized their model from grids to arbitrary graphs.
The abelian sandpile model of Dhar begins with a
connected graph $G = (V, E)$ and a distinguished vertex $q\in V $, called the \emph{sink}. A \emph{configuration} of $(G, q)$ is a vector $\vec{c}\in \mathbb{N}^{V-q}$. A non-sink vertex $v$ is \emph{stable} if its degree satisfies $d(v) > \vec{c}(v)$; otherwise it is \emph{unstable}. Moreover, a configuration is \emph{stable}
if every vertex $v$ in $V-q$ is stable. \emph{Toppling} an unstable vertex $u\in V-q$ in $\vec{c}$ is the operation performed by decreasing
$\vec{c}(u)$ by the degree $d(u)$, and for each neighbour $v$ of $u$ different from $q$, adding the multiplicity $m(u,v)$ of the edge $uv$ to $\vec{c}(v)$. Starting from any initial configuration $\vec{c}$, by performing a sequence of topplings, we eventually arrive at a stable configuration. It is not hard to see that the stabilization of an unstable configuration is
unique \cite{Dhar1990Self,Biggs1999}. The stable configuration associated to $\vec{c}$ will be denoted by $s(\vec{c})$.
Now, let $(\vec{c} + \vec{d})(u):= \vec{c}(u) + \vec{d}(u)$ for all $u\in V-q$ and $\vec{c} \oplus \vec{d }:= s(\vec{c} + \vec{d})$. A configuration $\vec{c}$ is \emph{recurrent}
if it is stable and there exists a non-zero configuration $\vec{r}$ such that $s(\vec{c}+\vec{r}) = \vec{c}$. Dhar\cite{Dhar1990Self} proved that the number of recurrent configurations is equal to the number of spanning trees of $G$, and that the set of recurrent configurations with $\oplus$ as a binary operation forms a finite abelian group, which is called the \emph{sandpile group} of $G$.
Soon after that, it was found that the sandpile group is isomorphic
to a number of `classical' abelian groups associated with graphs, such as the group of components in Arithmetic Geometry \cite{Lorenzini1989Arithmetical}, Jacobian group and Picard group in Algebraic Geometry\cite{Bak2007Riemann}, the
determinant group in lattice theory\cite{Bacher1997The},
the critical group of a dollar game \cite{Biggs1997Algebraic,Biggs1999}.

As an abstract abelian group, the structure of the sandpile group is independent of the choice of the sink $q$. We denote the sandpile group of $G$ by $S(G)$. The classification theorem for finite abelian groups asserts that $S(G)$ has a direct sum
decomposition 
$$S(G)=\mathbb{Z}_{d_1}\oplus\mathbb{Z}_{d_2}\oplus\cdots \oplus\mathbb{Z}_{d_r},$$
where the integers $d_1,\ldots, d_r$, called the \emph{invariant factors} of $S(G)$, satisfy $d_1\ge2$ and
$d_i \mid d_{i+1}$ for $i=1,\dots,r-1$. The number of factors $r$ is the minimum number of generators of $S(G)$, and is denoted by $\mu(G)$.

The standard method of obtaining invariant factors of a finite abelian group is first to
choose a presentation of the group, and then compute the Smith Normal From (SNF) of the matrix of
relations. For the sandpile group, it is well known that the Laplacian matrix is one of its relation matrices. By computing the SNF of the Laplacian matrix, the sandpile groups for many special families of regular or near regular graphs have been completely or partially determined in the last twenty
years, such as wheel graphs, complete multipartite graphs, Cartesian product of complete graphs $K_n\square K_m$, the ladder $K_2\square C_n$, the M\"{o}bius
ladder graph, a Cayley graph $D_n$ of dihedral group, the squared cycle $C^2_n$, the thick cycle, etc.; see
\cite{Chen2008On,Chen2006On,Hou2006On,Christianson2002The,Deryagina2014On,Krepkiy2013The, Jacobson2003Critical,Musiker2009The,Shi2011The,Toumpakari2007On,
Glass2014Critical,Reiner2014Critical,Alfaro2012On,Chandler2015The,Berget2012The, Chan2014Sandpile,Ducey2018The,Ducey2017On,Raza2015On,Bai2003On,Glass2017Critical,
Bond2016The,Levine2011Sandpile,GOEL2019138,NCG00966} and references therein.

However, for families of irregular graphs, the SNF of the Laplacian matrix is not easy to get in general. So in \cite{CHEN201968}, the authors of the present paper determined the sandpile group of a family of irregular graphs by using the relations determined by the cycles and cuts of the graph. Using this method, it is easy to show that the sandpile group of a polygon chain is always cyclic, which has been proved earlier in \cite{Becker2016Cyclic,Krepkiy2013The}. Here, we shall follow this idea to study the sandpile group of a (twisted) polygon ring.  We first show that the sandpile group of any (twisted) polygon ring can be generated by at most three edges, and then we give an explicit relation matrix among these edges. So the sandpile group  of any (twisted) polygon ring is the direct sum of at most three cyclic groups. Our result can be used for computing sandpile groups of various infinite families of (twisted) polygon rings, by computing the Smith Normal Form of the relation matrix. Examples include many known families, such as generalized wheel graphs, ladder graphs, M\"{o}bius ladders, etc.\cite{Chen2006On,Sandpile2003,Deryagina2014On}.

The paper is organized as follows. In Section 2, we cover preliminaries. In Section 3, by using the structure properties of (twisted) polygon rings, we not only show that the sandpile group of any polygon ring can be generated by at most three edges (see Theorem \ref{thm:3.1}), but also give a relation matrix for these generators (see Theorem 3.2).  This is  the most valuable aspect of this study, since these results provide a uniform method to compute the sandpile group of any (twisted) polygon ring. As applications, we give in Section 4 the explicit relation matrices for a special family of polygon rings. In Sections 5 and 6, we determine the structure of sandpile groups by computing the Smith Normal Form of the matrices given in Section 4  for polygon rings and twisted polygon rings, respectively.

\section{Preliminaries}

Let $G=(V,E)$ be a connected graph with $n$ vertices and $m$ edges. Given an arbitrary orientation $\mathcal{O}$ of $E$, and an oriented edge $e=(u,v)$, $v$ is called the \emph{head} of $e$, denoted by $h(e)$, and $u$ is called the \emph{\emph{tail}} of $e$, denoted by $t(e)$. As the convention, if $e=(u,v),$ then $-e=(v,u)$.  Let $\mathbb{Z}V, \mathbb{Z}E$ denote the free abelian groups on $V$ and $E$, respectively. More clearly, every element $x\in \mathbb{Z}V $ is identified with the formal sum $\sum_{v\in V(G)}x(v)v$, where $x(v)\in \mathbb{Z}$, and similarly for $y\in\mathbb{Z}E$.

Consider a cycle $C= v_1e_1v_2e_2\cdots v_{k}e_kv_1$ in the undirected
graph $G$. The sign of an edge $e$ in $C$ with respect to the orientation $\mathcal{O}$ is $\sigma(e;C) = 1$
if $C = vev$ is a loop at the vertex $v$, and otherwise
$$
\sigma(e;C)=\left\{
\begin{aligned}
1,&\ \ \ \mbox{if}\ \ e\in C \  \mbox{and}\ \ t(e)=v_i, h(e)=v_{i+1}\ \  \mbox {for some}\ \  i;\\
-1,&\ \ \ \mbox{if}\ \ e\in C\ \  \mbox{and}\ \  t(e)=v_{i+1}, h(e)=v_i\ \  \mbox {for some}\ \  i;\\
0,&\ \ \ \mbox{otherwise\ ($e$ does not occur in $C$)}.
\end{aligned}
\right.
$$
Here we interpret indices module $k$, i.e., $v_{k+1}=v_1$.
We then identify $C$ with the formal sum
$\sum_{e\in E}\sigma(e,C)e\in \mathbb{Z}E$.

For each nonempty $U\subset V $, the
cut corresponding to $U$, denoted by $c_U$, is the collection of edges with one end vertex
in $U$ and the other in the complement $\overline{U}$. For each $e\in E$, define the sign of $e$ in
$c_U$ with respect to the orientation $\mathcal{O}$ by

$$
\sigma(e;c_U)=\left\{
\begin{aligned}
1,\ \ \  &\mbox{if}\ \ t(e)\in U \ \ \mbox{and}\ \  h(e)\in \overline{U};\\
-1,\ \ \  &\mbox{if}\ \  t(e)\in \overline{U}\ \ \mbox{and}\ \  h(e)\in U;\\
0,\ \ \  &\mbox{otherwise\ ($e$ does not occur in $c_U$)}.
\end{aligned}
\right.
$$
We then identify $c_U$ with the formal sum $\sum_{e\in E}\sigma(e,c_U)e\in \mathbb{Z}E$.

A vertex cut is the cut corresponding to a single vertex, $U =\{v\}$,
and we write $c_v$ for $c_U$ in this case.

\begin{defn}
The (integral) \emph{cycle space}, $\mathcal{C}\subseteq \mathbb{Z}E$, is the $\mathbb{Z}$-span of all cycles. The (integral) \emph{cut space}, $\mathcal{B}\subseteq \mathbb{Z}E$, is the $\mathbb{Z}$-span of all cuts.
\end{defn}

Let $L(G)$ be the Laplacian matrix of $G$. It can be viewed as a (linear) mapping $\mathcal{L}:$ $\mathbb{Z}V\rightarrow\mathbb{Z}V$. We also define a mapping $\rho:$ $\mathbb{Z}V\rightarrow\mathbb{Z}$ as $\rho(\sum_{v\in V}x(v)v)=\sum_{v\in V}x(v).$

Obviously, both $\mathcal{L}$ and $\rho$ are group homomorphisms. Then
we have the following well-known results.

\begin{thm}
Let $G=(V,E)$ be a graph. With the notation defined above, we have
$$S(G)\cong \frac{\Ker(\rho)}{\Img(\mathcal{L})}\cong\frac{\mathbb{Z}E}{\mathcal{C}\oplus \mathcal{B}}\, ,$$
where $\Ker(.)$ and $\Img(.)$ denote the kernel and the image of a mapping.
\end{thm}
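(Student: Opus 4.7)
The plan is to install the oriented boundary/coboundary machinery and then deduce both isomorphisms by standard homological manipulations. Fix the orientation $\mathcal{O}$ and let $\partial:\mathbb{Z}E\to\mathbb{Z}V$ be the boundary map $\partial(e)=h(e)-t(e)$, with transpose $\partial^{*}:\mathbb{Z}V\to\mathbb{Z}E$ given by $\partial^{*}(v)=\sum_{h(e)=v}e-\sum_{t(e)=v}e$. Directly from the definitions of $\sigma(e;C)$ and $\sigma(e;c_U)$, I would verify that
\begin{equation*}
\mathcal{C}=\Ker(\partial),\qquad \mathcal{B}=\Img(\partial^{*}),\qquad \mathcal{L}=\partial\circ\partial^{*}.
\end{equation*}
The first identity traces an oriented cycle; the middle uses that every cut $c_U$ is the signed sum of vertex cuts $c_v=-\partial^{*}(v)$ over $v\in U$; the last is the standard factorization $L=BB^{T}$.

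For the first isomorphism $S(G)\cong\Ker(\rho)/\Img(\mathcal{L})$, I would invoke the well-known presentation $S(G)\cong\mathbb{Z}^{V-q}/\Img(\widetilde{L})$, where $\widetilde{L}$ is the reduced Laplacian obtained by deleting the row and column indexed by the sink $q$. Since every row of $L$ sums to zero, $\Img(\mathcal{L})\subseteq\Ker(\rho)$, and the projection $\Ker(\rho)\to\mathbb{Z}^{V-q}$ that forgets the $q$-coordinate is an isomorphism of abelian groups (as $\rho(x)=0$ forces $x(q)=-\sum_{v\ne q}x(v)$) which sends $\Img(\mathcal{L})$ bijectively onto $\Img(\widetilde{L})$; dividing out yields the claim.

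For the second isomorphism, connectedness of $G$ gives $\Img(\partial)=\Ker(\rho)$, so $\partial$ descends to an isomorphism $\overline{\partial}:\mathbb{Z}E/\mathcal{C}\to\Ker(\rho)$ under which the image of $\mathcal{B}$ is $\partial\partial^{*}(\mathbb{Z}V)=\Img(\mathcal{L})$. The crucial step is $\mathcal{C}\cap\mathcal{B}=0$: given $x=\partial^{*}(y)\in\mathcal{C}\cap\mathcal{B}$, one has $\mathcal{L}(y)=\partial x=0$, and since $\Ker(\mathcal{L})=\mathbb{Z}\cdot\mathbf{1}$ for connected $G$ while $\partial^{*}(\mathbf{1})=0$, we conclude $x=0$. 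Hence $\mathcal{B}+\mathcal{C}$ coincides with the internal direct sum $\mathcal{C}\oplus\mathcal{B}$ inside $\mathbb{Z}E$, and the third isomorphism theorem gives
\begin{equation*}
\mathbb{Z}E/(\mathcal{C}\oplus\mathcal{B})\;\cong\;(\mathbb{Z}E/\mathcal{C})\big/\overline{\partial}^{-1}(\Img(\mathcal{L}))\;\cong\;\Ker(\rho)/\Img(\mathcal{L}).
\end{equation*}

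The only non-formal step in the whole argument is the trivial-intersection claim $\mathcal{C}\cap\mathcal{B}=0$; everything else is diagram-chasing with $\partial$ and $\partial^{*}$. I handled it via the kernel of $\mathcal{L}$ above, but an equivalent route is the classical orthogonality $\langle C,c_U\rangle=0$ in $\mathbb{Z}E$ for every cycle $C$ and cut $c_U$, which shows that $\mathcal{C}$ and $\mathcal{B}$ are $\mathbb{Z}$-orthogonal and thus meet only in $0$.
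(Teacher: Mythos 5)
Your argument is correct, and it is the classical proof via the boundary and coboundary maps with $\mathcal{L}=\partial\circ\partial^{*}$, including the one genuinely non-formal point (that $\mathcal{C}\cap\mathcal{B}=0$, which justifies writing the sum as $\mathcal{C}\oplus\mathcal{B}$). The paper itself offers no proof of this theorem --- it is stated as well known and referred to the literature (Biggs) --- so there is nothing to compare against; your write-up matches the standard treatment, granting, as the paper also does, the presentation $S(G)\cong\mathbb{Z}^{V-q}/\Img(\widetilde{L})$ as the starting point.
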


The middle presentation of the sandpile group in Theorem 2.2 is the well-known Jacobian group (also known as Picard group) of the graph. The Jacobian presentation has a natural set of generators for $S(G)$, for which the Laplacian matrix $L(G)$ of $G$ is a relation matrix. For more details, see \cite{Biggs1999}.

Here we mainly focus on the second presentation. For any $e\in E$, let $\delta_e = \sum_{f\in E}\delta_e(f)f\in \mathbb{Z}E$, where $\delta_e(f)=1$ if $e=f$ and $0$ otherwise. Then the collection of all $\delta_{e}$ is
a natural set of generators of the sandpile group $S(G)$, and the relations are given by the elements in $\mathcal{C}\oplus \mathcal{B}$. So to find a relation matrix, we only need to find a basis of the cycle space $\mathcal{C}$ and a basis of the cut space $\mathcal{B}$, respectively.

By considering an edge $e$ of the graph as the element $\delta_e$ of $\mathbb{Z}E / (\mathcal{C}\oplus \mathcal{B})$, we consider the edges as particular elements of the sandpile group. We say that a set $F$ of edges \emph{generates} the sandpile group if $S(G)$ is generated by the set $\{\delta_e\mid e\in F\}$. 

Now we recall the definition and basic properties of the Smith Normal Form (SNF) of an integer matrix.
Let $M, N$ be two $n\times n$ integer matrices. The two matrices are called \emph{equivalent} if there exist invertible integer matrices $P$ and $Q$ (i.e., $|\det(P)|=|\det(Q)|=1$) such that $PMQ=N$. We have the following well-known results.

\begin{thm}\label{th:1}
{\rm (1)} Each integer matrix $M$ with rank $r$, is equivalent to a diagonal matrix $diag(d_1,\ldots,d_r, 0, \ldots, 0)$, where $d_i\,|\,d_{i+1}$, $i=1,\ldots,r-1$, and all these integers are positive. Furthermore, the $d_i$ are uniquely determined by
$$d_i=\frac{\Delta_i}{\Delta_{i-1}},\  i=1, \ldots, r,$$
where $\Delta_i$ (called $i$-th \emph{determinant divisor}) equals the greatest common divisor of all $i\times i$ minors of the matrix $M$ ($1\le i\le r$) and $\Delta_{0}=1$.

{\rm (2)} Let $A$ be a finite abelian group with presentation $A=\{g_1,\ldots,g_n\ |\ \sum_{j=1}^n m_{ij}g_j=0, i=1,\ldots,n\} $. If $M=(m_{ij})$ is equivalent to the diagonal matrix $diag(d_1,\ldots,d_r, 0, \ldots, 0)$ then
$$A\cong\mathbb{Z}_{d_1}\oplus\cdots\oplus \mathbb{Z}_{d_r}.$$
\end{thm}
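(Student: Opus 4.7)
The plan for part (1) is to establish existence by a constructive reduction and uniqueness via the invariance of determinant divisors. For existence, I bring a nonzero entry of minimum absolute value to position $(1,1)$ by row and column swaps. If some entry $m_{1j}$ of the first row fails to be a multiple of $m_{11}$, I apply the Euclidean algorithm: write $m_{1j}=qm_{11}+r$ with $0<r<|m_{11}|$, subtract $q$ times column $1$ from column $j$, and swap columns so $r$ moves to position $(1,1)$; symmetric steps handle the first column. Since $|m_{11}|$ strictly decreases, the procedure terminates with $m_{11}$ dividing every entry in its row and column, which I then clear. If some entry $m_{ij}$ of the remaining block is not divisible by $m_{11}$, I add row $i$ to row $1$ and restart; after finitely many such iterations $m_{11}$ divides every entry of the matrix. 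Setting $d_1=|m_{11}|$ and recursing on the $(n-1)\times(n-1)$ trailing submatrix yields the divisibility chain $d_1\mid d_2\mid\cdots\mid d_r$.

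For the uniqueness claim in (1), I show that the $i$-th determinant divisor $\Delta_i$ is preserved under equivalence. By the Cauchy--Binet formula, each $i\times i$ minor of $PMQ$ is a $\ZZ$-linear combination of $i\times i$ minors of $M$, so $\Delta_i(M)\mid \Delta_i(PMQ)$; applying the same reasoning with $P^{-1},Q^{-1}$ reverses the divisibility and gives equality. For the matrix $\operatorname{diag}(d_1,\dots,d_r,0,\dots,0)$ with a divisibility chain, a direct check confirms $\Delta_i=d_1d_2\cdots d_i$: every nonzero $i\times i$ minor is a product $d_{j_1}\cdots d_{j_i}$ with $j_1<\cdots<j_i$, and the chain $d_1\mid d_2\mid\cdots$ ensures that $d_1d_2\cdots d_i$ divides each such product (and is itself achieved, for instance by picking rows and columns $1,\dots,i$). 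Hence $d_i=\Delta_i/\Delta_{i-1}$ is forced.

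For part (2), I identify $A$ with the quotient $\ZZ^n/R$, where $R\subseteq \ZZ^n$ is spanned by the rows of $M$ (viewing $g_j$ as the $j$-th standard basis vector). Multiplying $M$ on the left by a unimodular $P$ merely changes the generating set of $R$ and leaves $R$ unchanged; multiplying on the right by a unimodular $Q$ applies the automorphism of $\ZZ^n$ corresponding to $Q$, which sends $R$ to a subgroup with an isomorphic quotient. Replacing $M$ by its Smith form therefore preserves $A$ up to isomorphism, and reading off the quotient of $\operatorname{diag}(d_1,\dots,d_r,0,\dots,0)$ gives $A\cong\ZZ_{d_1}\oplus\cdots\oplus\ZZ_{d_r}\oplus\ZZ^{n-r}$. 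Finiteness of $A$ forces $n=r$, yielding the stated decomposition.

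The main obstacle is the bookkeeping in the reduction of part (1): verifying termination of the Euclidean step at each level and ensuring that, after clearing the first row and column, the pivot $d_1$ divides every remaining entry so that the recursion produces the divisibility chain rather than merely a diagonal form. Once this is in hand, the remaining ingredients (Cauchy--Binet for invariance, the explicit computation of $\Delta_i$ on a diagonal matrix, and the translation of matrix equivalence into isomorphism of quotients) are essentially formal.
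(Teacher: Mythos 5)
Your proof is correct and complete in outline: the Euclidean reduction with the restart step for the trailing block gives existence of the divisibility chain, Cauchy--Binet gives invariance of the determinant divisors and hence uniqueness, and the identification of $A$ with $\ZZ^n/R$ together with the action of unimodular $P$ and $Q$ gives part (2). Note that the paper offers no proof of this statement at all --- it is quoted as a well-known result (the Smith Normal Form theorem) --- so there is nothing to compare against; your argument is the standard textbook one and is sound.
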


The diagonal matrix in Theorem 2.3 (1) is called the \emph{Smith Normal Form} of $M$, and the integers $d_i$ are called \emph{invariant factors} of $M$. The matrix $M$ related to a presentation of the abelian group $A$ in part (2) of Theorem 2.3 is called the \emph{relation matrix} of $A$.

From (1) of the above theorem, we see that equivalent matrices have the same invariant factors. And (2) says that the invariant factors of $A$ are just the non-trivial invariant factors (those that are $\geq 2$) of its arbitrary relation matrix. So, to determine the structure of a finite abelian group, it is sufficient to find a set of generators and a complete set of relations among them, then compute the Smith Normal Form of the corresponding relation matrix.  In this paper, we shall start from the natural set of generators $\delta_e$ $(e\in E(G))$ to study the sandpile groups of polygon rings.

Let $a_1,\dots,a_n$ and $b_1,\dots,b_n$ be sequences of non-negative integers, and let $a=\sum_{i=1}^n a_i$ and $b=\sum_{i=1}^n b_i$. Take two paths, $Q^1 = x_0x_1\dots x_a$ of length $a$ and $Q^2 = y_0y_1\dots y_b$ of length $b$. 
For $j=0,1,\dots n$, define the vertex $v_i$ to be the vertex $x_t$, where $t=\sum_{i=1}^j a_i$. Similarly, define the vertex $u_i$ to be the vertex $y_t$, where $t=\sum_{i=1}^j b_i$. In particular, $v_0=x_0$ and $v_n = x_a$. We now define the \emph{polygon chain} $G_n = G_n(a_1\dots a_n; b_1\dots b_n)$ as the graph obtained from the union $Q^1\cup Q^2$ by adding edges $v_iu_i$ for $i=0,1,\dots,n$. 

Let $k_i = a_i+b_i+2$. The polygon chain defined above consists of a sequence of polygons with $k_1, \ldots , k_n$ sides. The first one of these polygons is a cycle $C_{k_1}$ of length $k_1$ and the last one is the cycle $C_{k_n}$. These two are called \emph{end-polygons}. The edges $v_0u_0$ and $v_nu_n$ in the end-polygons are called \emph{free edges} of $G_n$ (see Figure 1). By identifying $v_0$ with $v_n$, $u_0$ with $u_n$, and thus identifying $v_0u_0$ with $v_nu_n$, we obtain a \emph{polygon ring}, denoted by $R_n$ (see Figure 2(a)). If we identify $v_0$ with $u_n$, $u_0$ with $v_n$ we obtain a \emph{twisted polygon ring} $T_n$ (see Figure 3(a)).

\begin{figure}[htbp]
\centering
\scalebox{1.15}{\includegraphics{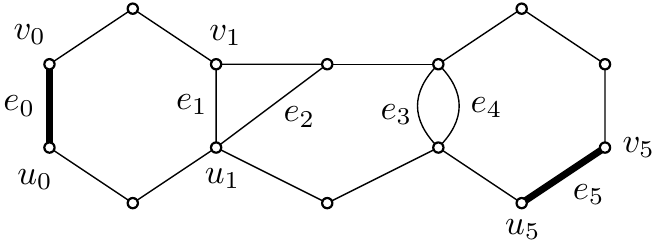}}
\caption{Polygon chain $G_5(2\,1\,1\,0\,3; 2\,0\,2\,0\,1)$ with its free edges $e_0,e_5$ drawn thick.}
\end{figure}

\begin{figure}[htbp]
\centering
\scalebox{1}{\includegraphics{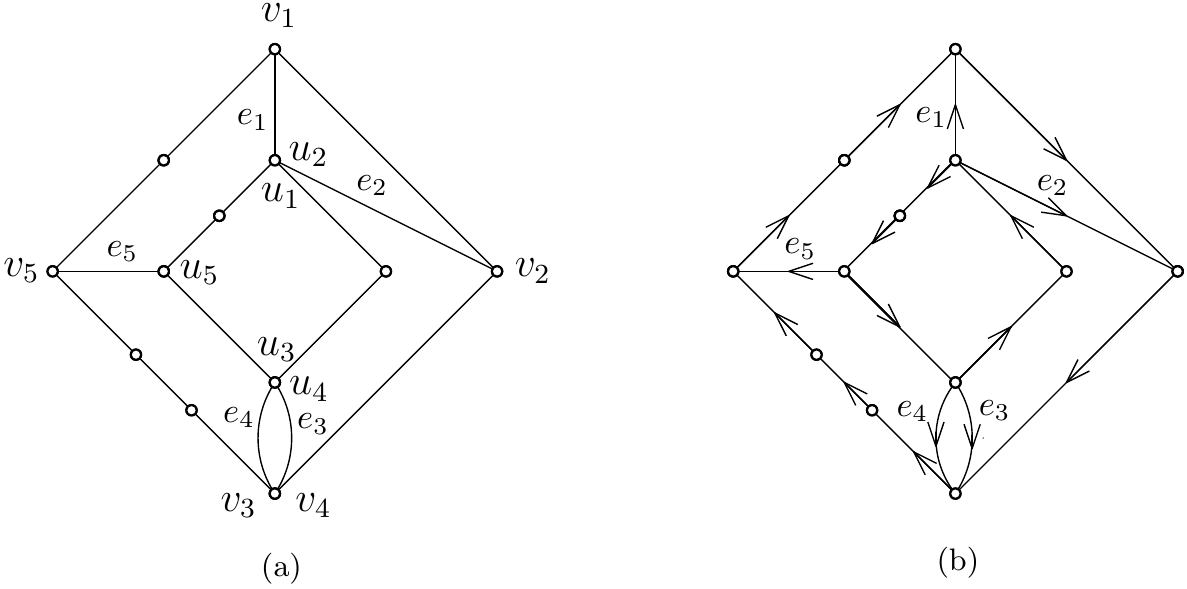}}
\caption{(a) The polygon ring $R_5(2\,1\,1\,0\,3; 2\,0\,2\,0\,1)$ obtained from $G_5$ in Figure 1 by identifying $e_0$ and $e_5$. (b) The oriented polygon ring $R_5$.}
\end{figure}

\begin{figure}[htbp]
\centering
\scalebox{1}{\includegraphics{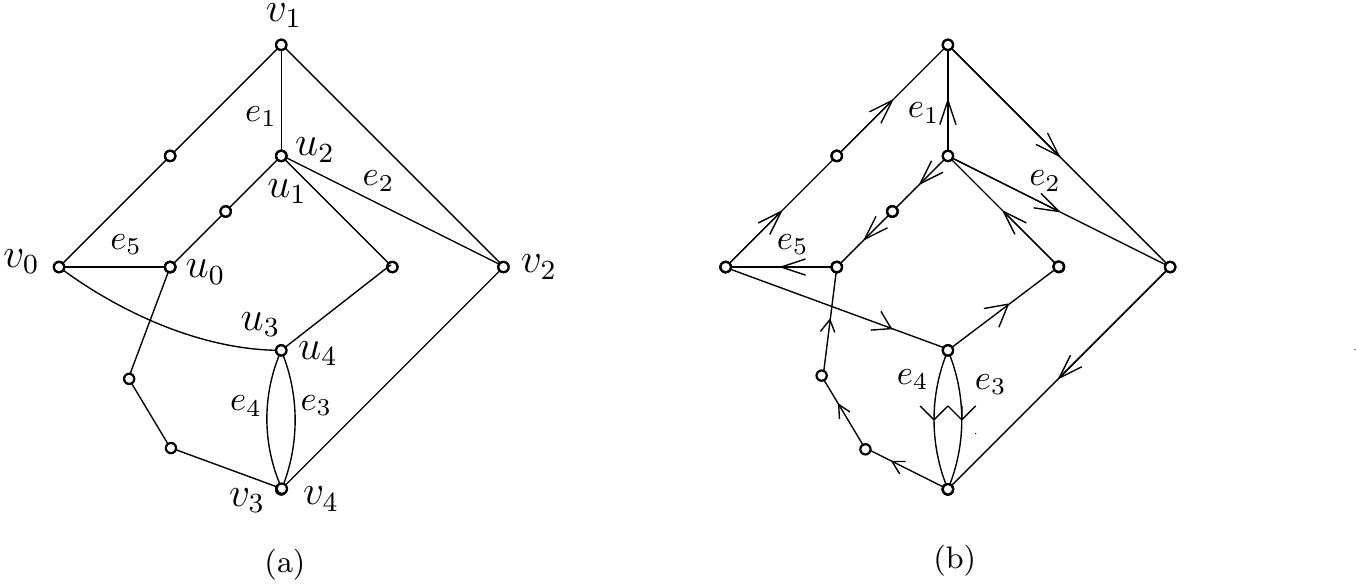}}
\caption{(a) The twisted polygon ring $T_5(2\,1\,1\,0\,3; 2\,0\,2\,0\,1)$ obtained from $G_5$ in Figure 1. (b) The oriented twisted polygon ring $T_5$.}
\end{figure}

As we mentioned before, the sandpile group of any polygon chain is cyclic. Moreover, the sandpile group $S(G_n)$ only depends on $(k_1, \ldots, k_n)$, and it is independent of the way that the polygons stack together \cite{Becker2016Cyclic,Krepkiy2013The}.  We shall prove that the sandpile group of a (twisted) polygon ring $R_n (T_n)$ is always the direct sum of at most three cyclic groups by showing it can be generated by at most three edges. However, it should be noted that,  unlike the polygon chain, the structure of the sandpile group of a polygon ring depends on the way that the polygons stacked together.

Finally, recall that for any connected graph $G$, the cuts $c_v, v\in V(G)$ generate the whole  cut space of $G$, and deleting any one of them, the remaining cuts are independent and still spanning. While for any plane graph, the cycles corresponding to the  faces generate the whole cycle space of $G$, and deleting any one of them, the remaining cycles are independent and still spanning. In the following, we write $e$ instead of $\delta_e$ for simplicity. Given integers $a_1,\ldots,a_t$, we write $gcd(a_1,\ldots,a_t)$ for their greatest common divisor.

\section{The minimum number of generators of $S(R_n)$ and $S(T_n)$}

In this section, we shall discuss the sandpile group of a polygon ring. Let $n\ge2$ and $a_1,\dots,a_n$, $b_1,\dots,b_n$, $a$, $b$, and $k_1,\ldots,k_n$ be integers as used in the definition of a polygon chain. For the corresponding polygon chain $G_n$, let $e_i=u_iv_i$ $(i=0,1,\ldots,n)$ and the polygons be $C_{k_i}$, $i=1,\dots n$. Recall that $k_i=a_i+b_i+2$ for $1\leq i\leq n$. Let $R_n = R_n(a_1\ldots a_n; b_1\ldots b_n)$ be the polygon ring and $T_n = T_n(a_1\ldots a_n; b_1\ldots b_n)$ be the corresponding twisted ring.

Now we fix an orientation $\mathcal{O}$ of $E(R_n)$ as follows. We orient all edges in $Q^1$ from $v_0$ towards $v_n$ and all edges in $Q^2$ in the opposite way from $u_n$ towards $u_0$, while each edge $e_i=(u_i,v_i)$ is oriented from $u_i$ to $v_i$, $1\leq i\leq n$. See Figure 2(b) for an example.
We use the same orientation for $T_n$, where the identified free edge is oriented from $u_0$ to $v_0$.

\begin{figure}[htbp]
\centering
\scalebox{0.7}{\includegraphics{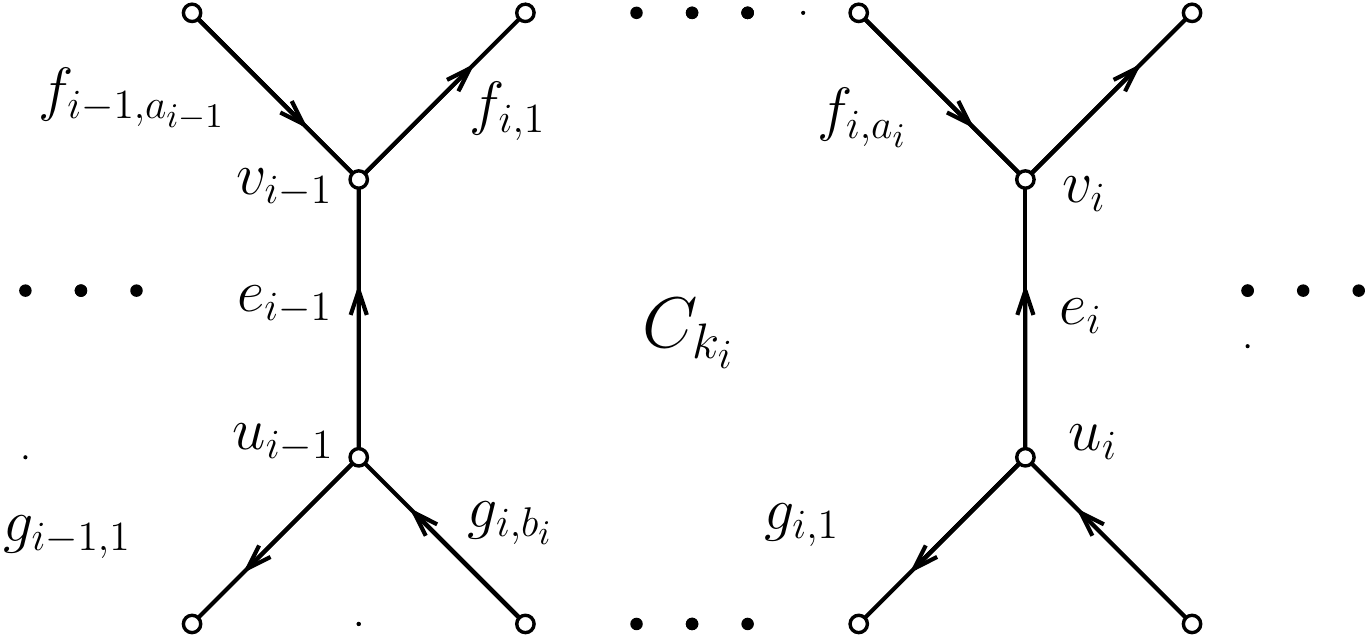}}
\caption{The cycle $C_{k_i}(i<n)$ in an oriented  polygon ring.}
\end{figure}

\begin{thm}\label{thm:3.1}
Let $R_n=R_n(a_1\ldots a_n; b_1\ldots b_n)$ and\/ $T_n=T_n(a_1 \ldots a_n; b_1\ldots b_n)$. The minimum number of generators of their sandpile group $S(R_n)$ and $S(T_n)$ (respectively) is at most $3$.
\end{thm}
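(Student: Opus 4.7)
The strategy is to work with the presentation $S(R_n) \cong \mathbb{Z}E/(\mathcal{C}\oplus\mathcal{B})$ from Theorem 2.2 and to reduce the natural edge generators, using only cut relations (elements of $\mathcal{B}$) and face-cycle relations (elements of $\mathcal{C}$), down to three. The candidate generators are a single edge $\alpha_1$ on the $Q^1$-side of polygon $C_{k_1}$, a single edge $\beta_1$ on its $Q^2$-side, and the identified free edge $e_0$. (If some boundary path of $C_{k_1}$ is empty, we omit the corresponding candidate, which only strengthens the bound.) The argument for the twisted ring $T_n$ is essentially identical.

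The first step is to use degree-$2$ vertex cuts along the two boundary paths. Every internal vertex of the $Q^1$-path from $v_{i-1}$ to $v_i$ has exactly two incident edges, oriented consistently along the path, so its vertex cut identifies these two edges up to sign. Iterating, all $a_i$ edges on the $Q^1$-side of $C_{k_i}$ represent a single element $\alpha_i$; symmetrically the $Q^2$-side collapses to one element $\beta_i$. Next, the cuts at the three-valent hub vertices $v_i$ and $u_i$, for $1\le i\le n-1$, yield after a direct check against the orientation fixed in Figure 2(b)
$$\alpha_{i+1}-\alpha_i = e_i, \qquad \beta_{i+1}-\beta_i = e_i.$$
Iterating, every $\alpha_i$ and $\beta_i$ becomes an integer combination of $\alpha_1$, $\beta_1$, and $e_1,\ldots,e_{i-1}$, so the generating set reduces to $\{\alpha_1, \beta_1, e_0, e_1, \ldots, e_{n-1}\}$.

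The second step uses the planar face-cycle relations in $\mathcal{C}$. Tracing the boundary of $C_{k_i}$ with the fixed orientation gives
$$a_i\alpha_i + b_i\beta_i + e_{i-1} - e_i = 0.$$
For $i=1,\ldots,n-1$, substituting the Step-1 expressions rewrites this as a formula for $e_i$ in terms of $\alpha_1$, $\beta_1$, and $e_0,e_1,\ldots,e_{i-1}$; starting from $e_1 = a_1\alpha_1 + b_1\beta_1 + e_0$ and recursing, every $e_i$ with $1\le i\le n-1$ becomes an integer combination of $\alpha_1$, $\beta_1$, $e_0$. Hence $S(R_n)$ is generated by the three edges $\alpha_1$, $\beta_1$, $e_0$. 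The remaining face cycle from $C_{k_n}$, the inner-hole cycle, and the single redundant vertex cut furnish extra integer relations among these three generators—these are precisely what feed the $3\times 3$ relation matrix in the subsequent theorem—but they are not needed for the upper bound.

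For the twisted ring $T_n$, the identifications $v_0\leftrightarrow u_n$ and $u_0\leftrightarrow v_n$ reverse the orientation of the identified free edge and flip a few signs in the cut and cycle relations near the seam, but Steps 1 and 2 proceed verbatim and once again collapse the generators to $\alpha_1$, $\beta_1$, $e_0$. The main care needed throughout is in degenerate polygons where some $a_i$ or $b_i$ vanishes: the corresponding $\alpha_i$ or $\beta_i$ is then absent and two consecutive hub vertices coincide, giving a single cut that mixes three consecutive $e$'s. This is the most delicate bookkeeping, but it only further shortens the elimination, so the bound of three generators is preserved.
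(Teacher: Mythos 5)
Your elimination scheme is essentially the paper's own: collapse each boundary path to a single class via degree-$2$ vertex cuts, use the hub cuts to get $\alpha_{i+1}=\alpha_i+e_i$ and $\beta_{i+1}=\beta_i+e_i$, and use the face cycles $e_i=e_{i-1}+a_i\alpha_i+b_i\beta_i$ to eliminate the rungs. In the generic case (some polygon with both $a_k>0$ and $b_k>0$, relabelled to be the first) this is correct, and your choice of $e_0$ rather than $e_1$ as the third generator is an immaterial unimodular change of generating set.

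The gap is in your treatment of the degenerate cases. You assert that if $b_1=0$ you may simply drop $\beta_1$ and that this ``only further shortens the elimination.'' That is false when some later $b_l>0$ (the paper's case (iv), $a_ib_i=0$ for all $i$ but both sides nonempty somewhere). Suppose $b_1=\cdots=b_{l-1}=0<b_l$ and let $j_1<\cdots<j_r$ be the indices with $b_{j}>0$. Then the bottom hub vertices collapse so that each distinct one is incident to exactly two $\beta$-type edges, and its cut reads $c_u=\beta_{j_i}-\beta_{j_{i+1}}+(\text{sum of }e\text{'s})$, cyclically in $i$. These $r$ cuts determine only the differences $\beta_{j_{i+1}}-\beta_{j_i}$ (their sum is a cut supported on the $e$'s), and the face cycles give only $b_{j}\beta_{j}$, not $\beta_{j}$ when $b_j>1$; the one remaining independent cycle $C^1=\sum a_i\alpha_i$ involves no $\beta$'s at all. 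So no individual $\beta_{j}$ is ever expressed, and your claimed generating set $\{\alpha_1,e_0\}$ need not generate: for instance in $R_2(1\,0;0\,2)$ the edge $\alpha_1$ becomes a loop (hence zero in $S$), $\{\alpha_1,e_0\}$ generates a cyclic subgroup, yet $S\cong\mathbb{Z}_2\oplus\mathbb{Z}_2$. The theorem survives, but only because one must \emph{add} a generator in this case rather than delete one: take $\beta_l$ for the first $l$ with $b_l>0$ (and symmetrically $\alpha_k$ for the first $k$ with $a_k>0$), which is exactly what the paper does in its case (iv). With that repair your argument goes through.
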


\begin{proof} We first prove the result for $R_n$.

It is sufficient to show that there exist at most three edges such that every edge $e\in E(R_n)$ can be expressed by them (viewed as an element in $S(R_n)=\frac{\mathbb{Z}E}{\mathcal{C\oplus \mathcal{B}}}$). We shall use the relations determined by cuts $c_v, v\in V(R_n)$ and polygons $C_{k_i}$ alternatively to prove the result. Let us recall the following simple fact, which we shall use repeatedly without pointing it out every time.
If $e$ and $f$ are two edges incident with a vertex $v$ of degree $2$ and $h(e)=t(f)=v$, then
$$e\equiv f \pmod{c_v}.$$
So bear in mind $f_i=f_{i,1}\equiv \cdots\equiv f_{i,a_i}$ and $g_i=g_{i,1}\equiv \cdots\equiv g_{i,a_i}$.

Also note that, under the given orientation $\mathcal{O}$, the cycle $C_{k_i}$ (see Figure 4) can be expressed as follows:
$$C_{k_i}=e_{i-1}+f_{i,1}+\cdots +f_{i,a_i}-e_i+g_{i,1}+\cdots +g_{i,b_i},\qquad i=1,\dots,n.$$
That is
$$C_{k_i}=e_{i-1}+a_if_i-e_i+b_ig_i,\qquad i=1,\dots,n.\eqno (3.1)$$
In (3.1), if $a_i=0$, then $f_i$ is not defined, and we treat the term $a_if_i$ as being zero. The same convention applies when $b_i=0$.

For clarity, we split the proof in four cases.

(i) There exists some $k$ such that $a_k>0, b_k>0$. Without loss of generality, we may assume that $a_1>1$ and $b_1>1$. We will show that every edge (as an element of $\ZZ E / (\B\oplus\C)$)  can be expressed by the three edges $e_1,f_1,g_1$.

Now we start from the cuts $c_{v_1}, c_{u_1}$ and the cycle $C_{k_2}$:
\begin{align}\label{eq:cuts and cycle at e_i}
  c_{v_1} &= f_1+e_1-f_2 \quad (\textrm{if } a_2>0),\nonumber \\
  c_{u_1} &= g_1+e_1-g_2 \quad (\textrm{if } b_2>0),\nonumber \\
  C_{k_2} &= e_1+a_2f_2+b_2g_2-e_2.\nonumber
\end{align}
From here it is easy to see that $e_2, f_2, g_2$ can be expressed by $e_1,f_1,g_1$. (We use the first and the second equality only when $a_2$ and $b_2$ are nonzero, respectively.) Thus, the same holds for every edge in $E(C_{k_2})$.

Now the proof proceeds by induction on $i$. Suppose that the edges in $E_{i-1}=\cup_{j=2}^{i-1}E(C_{k_j})$ have been expressed by $e_1,f_1,g_1$. If $a_i>0$ and $b_i>0$, then
$$c_{v_{i-1}}=f_{i}-\sum_{e\in E_{i-1},h(e)=v_{i-1}}e\ \qquad \mbox{and}\ \qquad c_{u_{i-1}}=g_i-\sum_{e\in E_{i-1}, t(e)=u_{i-1}}e.\eqno (3.2)$$
By the induction, $f_i$ and $g_i$ can be expressed by $e_1,f_1,g_1$. Then by (3.1), so does $e_i$. Thus, all edges in $C_{k_i}$ can be expressed in this case.  The proof is basically the same when at least one of conditions $a_i=0$ or $b_i=0$ or $a_i=b_i=0$ holds. The details are left to the reader.

(ii)  $a_1=\cdots=a_n=b_1=\cdots=b_n=0$. In this case, by (3.1),
$e_1\equiv \cdots\equiv e_n$. So $\mu(R_n)=1.$

(iii) $a_1=\cdots=a_n=0$ but there exists some $k$ with $b_k>0$, or $b_1=\cdots=b_n=0$ but there exists some $k$ with $a_k>0$. Without loss of generality, suppose $a_1>0$ and $b_1=\cdots=b_n=0$. By a similar argument as in (i), it is easy to show that every edge $e\in E(R_n)$ can be expressed by $e_1$ and $f_1$. So in this case, $\mu(R_n)\leq 2$.

(iv) $a_i\cdot b_i = 0$ for $1\leq i\leq n$, but there exist $k$ and $l$ such that $a_k>0$ and $b_l>0$. Suppose $a_1>0$ and $b_1=\cdots=b_{l-1}=0$, but $b_l>0$. In this case, every edge $e\in E(R_n)$ can be expressed by $e_1$, $f_1$ and $g_l$.

Combining the above together, we obtain the result for $R_n$.

Since the relations determined by cuts $c_{u_{i}}, c_{v_{i}}$ ($i=1,\dots,n-1$) and cycles $C_{k_j}$ ($j=2,\dots, n-1$) in $R_n$ and $T_n$ are the same, it is easy to see that all edges in $T_n$ except $e_n$ can be expressed by $e_1,f_1,g_1$ with the same procedure as in $R_n$. While $e_n$ is determined by the relation
$$e_n=-(e_{n-1}+a_{n}f_n+b_ng_n).$$
So the result holds for $T_n$, too.
\end{proof}

Note that in the above expressing process, we only use the cycles $C_{k_i}$ ($2\leq i\leq n$) and cuts $c_{v_i}, c_{u_i}, i=1,\ldots,n-1$. So there are three independent elements that have not been used: two in the cycle space and one in the cut space. They give a relation matrix for the three generators $e_1,f_1,g_1$.

For $R_n$, we choose:
$$C_{k_1}=a_1f_1+e_n+b_1g_1-e_1,\ C^1 = a_1f_1+a_2f_2+\cdots+a_nf_n \hbox{ and } c_{v_n} = f_n+e_n-f_1.\eqno (3.3)$$

For $T_n$, we choose:
$$C_{k_1}=a_1f_1+e_n+b_1g_1-e_1,\ C^1 = a_1f_1+a_2f_2+\cdots+a_nf_n+e_n \hbox{ and } c_{v_n} = -g_n+e_n-f_1.\eqno (3.4)$$
\smallskip
Recall that the expressions for the edges are the same in $R_n$ and $T_n$ in terms of $e_1,f_1,g_1$, but different for $e_n$, which satisfies $e_n(T_n)=-e_n(R_n)$ in the form. So suppose in $R_n$
 $$\left(
\begin{array}{c}
 f_n\\
 e_n\\
 g_n
  \end{array}
\right)=\left(
\begin{array}{ccc}
 a_{11}&a_{12}&a_{13}\\
 a_{21}&a_{22}&a_{23}\\
 a_{31}&a_{32}&a_{33}
  \end{array}
\right)\left(
\begin{array}{c}
 f_1\\
 e_1\\
 g_1
  \end{array}
\right).$$
Then we have in $T_n$
 $$\left(
\begin{array}{c}
 f_n\\
 e_n\\
 g_n
  \end{array}
\right)=\left(
\begin{array}{ccc}
 a_{11}&a_{12}&a_{13}\\
 -a_{21}&-a_{22}&-a_{23}\\
 a_{31}&a_{32}&a_{33}
  \end{array}
\right)\left(
\begin{array}{c}
 f_1\\
 e_1\\
 g_1
  \end{array}
\right).$$
By expressing 
$$a_1f_1+a_2f_2+\cdots+a_nf_n=c_1f_1+c_2e_1+c_3g_3,$$ 
we obtain the following result.

\begin{thm}\label{thm:3.3}
Let the integers $a_{ij}$ and $c_i$ $(1\le i\le3,\ 1\le j\le3)$ be as introduced above. Then the following holds.

{\rm (1)} For the polygon ring $R_n$, the generators $e_1, f_1$ and $g_1$ of its sandpile group $S(R_n)$ have the
relation matrix:
$$
M=\left(
\begin{array}{ccc}
  a_{11}-a_{1}-1&a_{12}+1&a_{13}-b_1\\
  a_{21}+a_1&a_{22}-1&a_{23}+b_1\\
  c_1&c_2& c_3
 \end{array}
\right).
$$

{\rm (2)} For the twisted polygon ring $T_n$, the generators $e_1, f_1$ and $g_1$ of its sandpile group $S(T_n)$ have the
relation matrix:
$$
T=\left(
\begin{array}{ccc}
  a_{31}+a_{1}+1&a_{32}-1&a_{33}+b_{1}\\
  a_{21}-a_1&a_{22}+1&a_{23}-b_1\\
  c_1-a_{1}&c_2+1& c_3-b_{1}
 \end{array}
\right).
$$
\end{thm}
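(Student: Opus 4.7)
The plan is to read off the relation matrix directly from the reduction established in Theorem~\ref{thm:3.1}. That theorem's proof uses the cycles $C_{k_i}$ for $i=2,\ldots,n$, the cuts $c_{u_i},c_{v_i}$ for $i=1,\ldots,n-1$, and the cuts at all degree-two intermediate vertices to express every edge of $R_n$ (or $T_n$), modulo $\C\oplus\B$, as a $\ZZ$-linear combination of $e_1,f_1,g_1$. A straightforward count gives $n-1$ used cycles and $a+b-2$ used cuts, i.e., exactly $|E|-3$ elements of $\C\oplus\B$ in total; since these act in an echelon fashion on the edges, the quotient of $\ZZ E$ by them is free abelian of rank~$3$ on $\{e_1,f_1,g_1\}$. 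Thus it remains only to quotient by the three leftover independent elements of $\C\oplus\B$ displayed in (3.3) and (3.4), giving a $3\times 3$ relation matrix.

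To compute the entries I will first run the inductive procedure of Theorem~\ref{thm:3.1} and record the coefficients
$$f_n=a_{11}f_1+a_{12}e_1+a_{13}g_1,\ e_n=a_{21}f_1+a_{22}e_1+a_{23}g_1,\ g_n=a_{31}f_1+a_{32}e_1+a_{33}g_1,$$
and $a_1f_1+\cdots+a_nf_n=c_1f_1+c_2e_1+c_3g_1$, all modulo the used relations. Substituting these into the triple $C_{k_1},C^1,c_{v_n}$ of (3.3) yields directly the rows $(a_1+a_{21},\,a_{22}-1,\,b_1+a_{23})$ for $C_{k_1}$, $(c_1,c_2,c_3)$ for $C^1$, and $(a_{11}+a_{21}-1,\,a_{12}+a_{22},\,a_{13}+a_{23})$ for $c_{v_n}$; subtracting the $C_{k_1}$-row from the $c_{v_n}$-row cleans this last vector into $(a_{11}-a_1-1,\,a_{12}+1,\,a_{13}-b_1)$, reproducing $M$ exactly. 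For $T_n$ the structural change is that the identified free edge has reversed orientation, so $e_n(T_n)=-e_n(R_n)$ in every substitution; additionally, the cycle $C^1(T_n)$ of (3.4) picks up an extra $+e_n$ and the cut $c_{v_n}(T_n)$ of (3.4) involves $g_n$ in place of $f_n$. Plugging these in and doing analogous row operations (negating the $C_{k_1}$-row and using it to simplify the other two rows) produces $T$.

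The one genuinely nontrivial step, and the expected obstacle, is verifying that $\{C_{k_1},C^1,c_{v_n}\}$, together with the used cycles and cuts, really is a basis of $\C\oplus\B$, not merely a spanning set. Since $\C\cap\B=0$, it suffices to check the cycle part and the cut part separately. For $R_n$ the cycle basis is planar: the $n$ polygon faces $C_{k_1},\ldots,C_{k_n}$ together with the ring face $C^1=\sum a_if_i$ are $n+1$ independent elements of $\C$, while cuts at all vertices except one form a basis of $\B$, so $c_{v_n}$ completes the list. For $T_n$ planarity is unavailable, but independence of $\{C_{k_1},\ldots,C_{k_n},C^1\}$ follows from a direct coefficient comparison: the coefficient of $e_n$ (present in $C_{k_1}$, $C_{k_n}$, and $C^1$) and of some $g_j$ with $b_j>0$ force all scalars in any hypothetical dependence to vanish, and the cut basis works as in $R_n$. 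Once this is in place, the substitution calculations above yield $M$ and $T$ exactly as stated.
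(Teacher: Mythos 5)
Your proposal is correct and follows essentially the same route as the paper: substitute the recorded expressions for $f_n,e_n,g_n$ and $C^1$ into the three leftover relations (3.3)/(3.4) to get the coefficient matrices $M_1$ and $T_1$, then perform the unimodular row operations (subtracting/adding the $C_{k_1}$-row) to arrive at $M$ and $T$; your computed rows and reductions match the paper's exactly. The only difference is that you explicitly verify that the used relations together with $\{C_{k_1},C^1,c_{v_n}\}$ form a basis of $\mathcal C\oplus\mathcal B$, a point the paper asserts without proof.
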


\begin{proof} Given the notation, it is easy to see the coefficient matrices of  (3.3) and (3.4) are
$$
M_1=\left(
\begin{array}{ccc}
 a_{11}+a_{21}-1&a_{12}+a_{22}&a_{13}+a_{23}\\
  a_{21}+a_1&a_{22}-1&a_{23}+b_1\\
  c_1&c_2& c_3
 \end{array}
\right)\sim
\left(
\begin{array}{ccc}
 a_{11}-a_{1}-1&a_{12}+1&a_{13}-b_1\\
  a_{21}+a_1&a_{22}-1&a_{23}+b_1\\
  c_1&c_2& c_3
 \end{array}
\right)=M
$$
$$
T_1=\left(
\begin{array}{ccc}
  a_{31}+a_{21}+1&a_{32}+a_{22}&a_{33}+a_{23}\\
  a_{21}-a_1&a_{22}+1&a_{23}-b_1\\
  c_1-a_{21}&c_2-a_{22}& c_3-a_{23}
 \end{array}
\right)\sim
\left(
\begin{array}{ccc}
  a_{31}+a_{1}+1&a_{32}-1&a_{33}+b_{1}\\
  a_{21}-a_1&a_{22}+1&a_{23}-b_1\\
  c_1-a_{1}&c_2+1& c_3-b_{1}
 \end{array}
\right)=T.
$$
\end{proof}

Since the number of spanning trees $\tau(R_n)=det(M)$ and $\tau(T_n)=det(T)$, we can get a general relation between $\tau(R_n)$ and $\tau(T_n)$ from the above theorem. In the next section, we shall proceed to give explicit relation  matrices among generators for a family of special (twisted) polygon rings:
$R_n(a\ldots a; b\ldots b)$ and $T_n(a\ldots a; b\ldots b)$, that is, $a_1=\cdots=a_n=a$ and $b_1=\cdots=b_n=b$. In this case we will use a simplified notation $R_n(a,b)$ and $T_n(a,b)$ (see Figure 4 for examples). Note that $R_n(0,0)$ is just the 2-vertex graph with $n$ parallel edges. The graph $R_n(a,0)$ ($a>0$) is the \emph{generalized wheel graph}, and $R_n(1,1)$ and $T_n(1,1)$ are the ladder and the M\"{o}bius ladder graphs, respectively. The sandpile groups of these graphs were discussed in many papers as we mentioned in the introduction.

\begin{figure}[htbp]
\centering
\scalebox{0.7}{\includegraphics{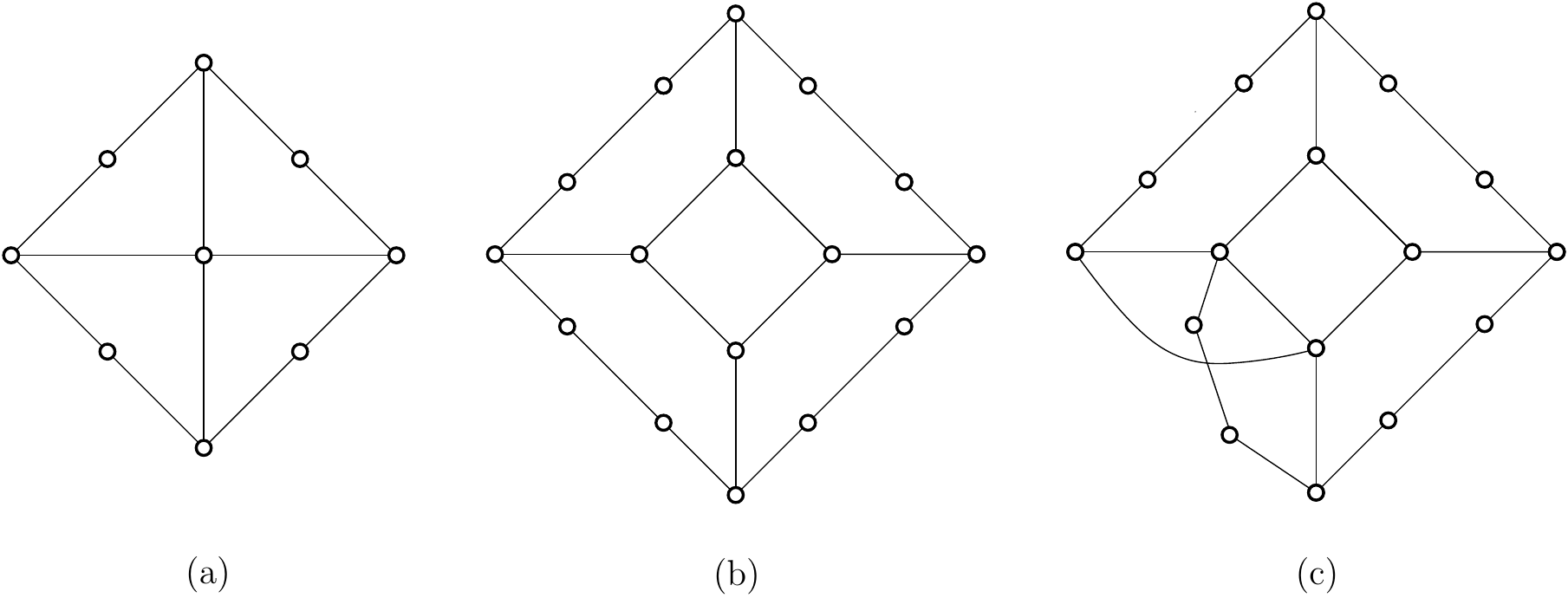}}
\caption{(a) The polygon ring $R_4(2,0)$. (b) The polygon ring $R_4(3,1)$. (c)~The twisted polygon ring $T_4(3,1)$.}
\end{figure}

\section{The relation matrices of $R_n(a,b)$ and $T_n(a,b)$}

In this section, we focus on  the explicit expressions for the relation matrices given in Theorem 3.2 for $R_n(a,b)$ and $T_n(a,b)$.

\begin{lemma}
Let $a\geq b$ be non-negative integers with $a>0$. Let $(\tau_n)_{n\ge1}$ be the integer sequence satisfying $\tau_n=(a+b+2)\tau_{n-1}-\tau_{n-2}$ with initial values $\tau_{-1}=-1$ and $\tau_0=0$. Let $$A=\left(
\begin{array}{ccc}
 1&1&0\\
 a&a+b+1&b\\
 0&1&1
  \end{array}
\right).$$
Then
$$A^n=\left(
\begin{array}{ccc}
  \frac{b}{a+b}+\frac{a}{a+b}(\tau_n-\tau_{n-1})&\tau_n& -\frac{b}{a+b}+\frac{b}{a+b}(\tau_n-\tau_{n-1})\\
  a\tau_n&\tau_{n+1}-\tau_{n}&b\tau_n\\
-\frac{a}{a+b}+\frac{a}{a+b}(\tau_n-\tau_{n-1})&\tau_n& \frac{a}{a+b}+\frac{b}{a+b}(\tau_n-\tau_{n-1})
 \end{array}
\right).$$
\end{lemma}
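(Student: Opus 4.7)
\medskip

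The plan is to prove the identity by induction on $n$, using the recurrence $\tau_{n+1}=(a+b+2)\tau_n - \tau_{n-1}$ as the essential algebraic ingredient. Before writing the induction, it is worth noting why the formula has the stated shape: the characteristic polynomial of $A$ factors as $(x-1)(x^2-(a+b+2)x+1)$, since the vector $(b,0,-a)^T$ is a fixed vector of $A$ (easy check), and since $\det A = 1$ and $\mathrm{tr}\, A = a+b+3$. The two remaining eigenvalues therefore satisfy the same recurrence as $\tau_n$, so the entries of $A^n$ must be $\mathbb{Z}[1/(a+b)]$-linear combinations of $1$, $\tau_n$, and $\tau_{n-1}$ — exactly as displayed. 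This structural remark (which I would record briefly) is what suggests the formula in the first place; it need not appear in the final proof, but it explains why induction will close.

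For the base case I would take $n=0$ and verify that the displayed matrix equals the identity. Using $\tau_{-1}=-1$, $\tau_0=0$, $\tau_1=1$ (the last from the recurrence), each of the nine entries collapses: for instance the $(1,1)$ entry becomes $\tfrac{b}{a+b}+\tfrac{a}{a+b}(0-(-1))=1$, the $(1,3)$ entry becomes $-\tfrac{b}{a+b}+\tfrac{b}{a+b}=0$, and the $(2,2)$ entry becomes $\tau_1-\tau_0=1$; the other six entries are even simpler. (One could alternatively check $n=1$, where the matrix reduces to $A$ itself; this is comparably quick.)

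For the inductive step I would assume the formula holds for $n$ and compute $A^{n+1}=A\cdot A^n$ row by row, matching against the formula with $n$ replaced by $n+1$. In each of the nine entries, the required equality reduces, after clearing the factor $\tfrac{a}{a+b}$ or $\tfrac{b}{a+b}$ where relevant, to a single instance of the recurrence $\tau_{n+1}=(a+b+2)\tau_n-\tau_{n-1}$. For example, the $(1,1)$ entry of $AA^n$ is
\[
\tfrac{b}{a+b}+\tfrac{a}{a+b}(\tau_n-\tau_{n-1})+a\tau_n,
\]
and setting this equal to $\tfrac{b}{a+b}+\tfrac{a}{a+b}(\tau_{n+1}-\tau_n)$ is, after multiplying by $(a+b)/a$, precisely $(a+b+2)\tau_n-\tau_{n-1}=\tau_{n+1}$. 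The same mechanism handles the rest: entries in the second row produce $(a+b+2)\tau_n-\tau_{n-1}=\tau_{n+1}$ directly (via the middle row $(a,a+b+1,b)$ of $A$), while the third-row entries of $AA^n$ mirror the first-row computation with $a$ and $b$ swapped.

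The main obstacle is simply bookkeeping: there is no conceptual difficulty, but the nine entries must be checked carefully and the denominators $a+b$ must be handled (the hypothesis $a>0$ ensures $a+b>0$, so dividing is legitimate). I would organize the verification in three groups of three entries (one per row of $A$), pointing out that in each group the nontrivial identity is always a single application of the $\tau$-recurrence, and leaving the explicit arithmetic for the remaining entries to the reader after doing one representative case in full.
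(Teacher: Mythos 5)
Your proof is correct and follows essentially the same route as the paper's: induction on $n$ with base case checked directly and the inductive step reduced entrywise to the recurrence $\tau_{n+1}=(a+b+2)\tau_n-\tau_{n-1}$ (the paper multiplies $A^k\cdot A$ rather than $A\cdot A^n$, an immaterial difference). Your preliminary remark on the characteristic polynomial $(x-1)\bigl(x^2-(a+b+2)x+1\bigr)$ is a nice motivation not present in the paper, but the core argument is the same.
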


\begin{proof} We prove the result by induction on $n$. It is trivial to check that the result holds for $n=0,1$. Suppose the result holds for $n=k$. Let us write $\nu = \tau_k-\tau_{k-1}$ and $\nu' = \tau_{k+1}-\tau_k$. Note that $\nu' = \nu + (a+b)\tau_k$. Now it is easy to verify the following:
\begin{align*}
  A^{k+1} &= A^kA \\
   &= 
\left(
\begin{array}{ccc}
  \frac{b}{a+b}+\frac{a}{a+b}\nu+a\tau_k & (a+b+2)\tau_k-\tau_{k-1} & -\frac{b}{a+b}+\frac{b}{a+b}\nu+b\tau_k\\
  a\tau_{k+1} & (a+b+1)\tau_{k+1}-\tau_{k} & b\tau_{k+1}\\
-\frac{a}{a+b}+\frac{a}{a+b}\nu+a\tau_k & (a+b+2)\tau_k-\tau_{k-1 }& \frac{a}{a+b}+\frac{b}{a+b}\nu+b\tau_k
 \end{array}
\right) \\
  &=
\left(
\begin{array}{ccc}
 \frac{b}{a+b}+\frac{a}{a+b}\nu' & \tau_{k+1} & -\frac{b}{a+b}+\frac{b}{a+b}\nu' \\
  a\tau_{k+1} & \tau_{k+2}-\tau_{k+1} & b\tau_{k+1}\\
-\frac{a}{a+b}+\frac{a}{a+b}\nu' & \tau_{k+1} & \frac{a}{a+b}+\frac{b}{a+b}\nu'
 \end{array}
\right).
\end{align*}
This completes the proof. 
\end{proof}

\noindent
{\bf Remark 1.} Let $G_n$ be  the polygon chain $G_n(s,\ldots,s)$, where $s=a+b+2$. Then it is easy to see that $\tau_n$ in Lemma 4.1 is equal to the number of spanning trees of $G_{n-1}$ ($n>0$) since they satisfy the same linear recurrence relation with the same initial values. Solving the linear recurrence relation, we have
$$
  \tau_n = \frac{1}{\sqrt{s^2-4}}\left(\lambda_1^n-\lambda_2^n\right),\eqno (4.1)
$$
where
$$
  \lambda_1 = \frac{s+\sqrt{s^2-4}}{2},\ \ \ \lambda_2 = \frac{s-\sqrt{s^2-4}}{2}
$$
are the roots of $x^2-sx+1=0$.
From (4.1), it is easy to deduce that $$\sum_{i=0}^{n-1}\tau_i=\frac{\tau_n-\tau_{n-1}-1}{a+b}. \eqno (4.2)$$
$$\tau_{n-1}^2-\tau_n\tau_{n-2}=1. \eqno (4.3)$$
\smallskip

In the following, for simplicity, we always assume that $a\geq b$ and $n\geq 2$ for $R_n(a,b)$ and for $T_n(a,b)$.

\begin{thm}\label{thm:4.2}
Let $e_i, f_i$ and $g_i$ be as given in Theorem 3.1. Then the following holds.

{\rm (1)} For the polygon ring $R_n(a,b)$ with  $a\geq b>0$, the generators $e_1, f_1$ and $g_1$ have the
relation matrix:
$$
M=\left(
\begin{array}{ccc}
  \frac{a}{a+b}(\tau_n-\tau_{n-1}-1)&\tau_n&\frac{b}{a+b}(\tau_n-\tau_{n-1}-1)\\
  a(\tau_{n-1}+1)&\tau_{n}-\tau_{n-1}-1&b(\tau_{n-1}+1)\\
  \frac{a}{a+b}(nb+a(\tau_{n-1}+1))&\frac{a}{a+b}(\tau_n-\tau_{n-1}-1)& \frac{b}{a+b}(-na+a(\tau_{n-1}+1))
 \end{array}
\right).
$$

{\rm (2)} For the polygon ring $R_n(a,0)$ with  $a>0$, the generators $e_1, f_1$  have the
relation matrix:
$$
N=\left(
\begin{array}{cc}
 \tau_n-\tau_{n-1}-1&\tau_n\\
  a(\tau_{n-1}+1)& \tau_{n}-\tau_{n-1}-1\\
  \end{array}
\right).
$$

{\rm (3)} For the twisted polygon ring $T_n(a,b)$ with  $a\geq b>0$, the generators $e_1, f_1$ and $g_1$ have the
relation matrix:
$$
T=\left(
\begin{array}{ccc}
  \frac{a}{a+b}(\tau_n-\tau_{n-1}-1)+1&\tau_n&\frac{b}{a+b}(\tau_n-\tau_{n-1}-1)+1\\
  a(\tau_{n-1}-1)&\tau_{n}-\tau_{n-1}+1&b(\tau_{n-1}-1)\\
  \frac{a}{a+b}(nb+a-b\tau_{n-1})&-\frac{b}{a+b}(\tau_n-\tau_{n-1}-1)-1& \frac{b}{a+b}(-na+a-b\tau_{n-1})
 \end{array}
\right).
$$

\end{thm}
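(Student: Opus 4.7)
The plan is to specialize the general relation matrices of Theorem 3.2 to the uniform case $a_1=\cdots=a_n=a$, $b_1=\cdots=b_n=b$ by writing down the entries $a_{ij}$ and $c_i$ appearing there. The key observation is that the inductive step in the proof of Theorem \ref{thm:3.1} is governed by a single transition matrix. For $1\le i\le n-1$, the cuts $c_{v_i}$ and $c_{u_i}$ force $f_{i+1}\equiv f_i+e_i$ and $g_{i+1}\equiv g_i+e_i$, and substituting these into $C_{k_{i+1}}=e_i+af_{i+1}-e_{i+1}+bg_{i+1}$ yields $e_{i+1}\equiv af_i+(a+b+1)e_i+bg_i$. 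Thus $(f_{i+1},e_{i+1},g_{i+1})^{T}=A(f_i,e_i,g_i)^{T}$, where $A$ is precisely the matrix of Lemma 4.1. Iterating gives
\[
(f_n,e_n,g_n)^{T}=A^{n-1}(f_1,e_1,g_1)^{T},
\]
so the numbers $a_{ij}$ of Theorem 3.2 are exactly the entries of $A^{n-1}$, read off from Lemma 4.1 with $n$ replaced by $n-1$.

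For the coefficients $c_1,c_2,c_3$ expressing $C^1=a(f_1+\cdots+f_n)$ in the basis $(f_1,e_1,g_1)$, the point is that $f_i$ is the first row of $A^{i-1}$ applied to $(f_1,e_1,g_1)^{T}$, hence $\sum_{i=1}^{n}f_i$ is the first row of $\sum_{i=0}^{n-1}A^{i}$ applied to the same vector. The telescoping identity $\sum_{i=1}^{n-1}(\tau_i-\tau_{i-1})=\tau_{n-1}$ handles the first and third columns, and identity (4.2) of Remark 1, namely $\sum_{i=0}^{n-1}\tau_i=(\tau_n-\tau_{n-1}-1)/(a+b)$, collapses the middle column. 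With the $a_{ij}$ and $c_i$ now explicit, parts (1) and (3) follow by direct substitution into the equivalent matrices $M_1$ and $T_1$ that appear in the proof of Theorem 3.2 (these are the forms actually displayed in the statement of Theorem 4.2, with the three rows corresponding respectively to the relations $c_{v_n}$, $C_{k_1}$, and $C^1$). Every entry then simplifies in a single step by the recurrence $\tau_n=(a+b+2)\tau_{n-1}-\tau_{n-2}$, used in the form $(a+b+1)\tau_{n-1}-\tau_{n-2}=\tau_n-\tau_{n-1}$.

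For the twisted case (3), the only new ingredients are the sign flip on the middle row (because $e_n(T_n)=-e_n(R_n)$) and the extra summand $e_n$ inside $C^1$ for $T_n$, which produces the shifts $c_i-a_{2i}$ in the third row of $T_1$. Part (2) is the degenerate case $b=0$: the variable $g_1$ drops out, $A$ collapses to its top-left $2\times 2$ block $\begin{pmatrix}1 & 1\\ a & a+1\end{pmatrix}$, and the two leftover independent relations $c_{v_n}$ and $C_{k_1}$, expressed in terms of $(f_1,e_1)$ via the same procedure, give the two rows of $N$ after identical recurrence-based simplification. The whole argument is essentially mechanical; the only real friction is bookkeeping---matching each row of $M_1$ or $T_1$ with the correct cycle or cut relation, keeping sign conventions straight in the twisted case, and invoking identity (4.2) at exactly the right place so that the partial sum $\sum\tau_i$ collapses cleanly. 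Once these alignments are fixed, each of the nine entries in (1) and (3), and the four entries in (2), falls out from one application of the linear recurrence for $\tau_n$.
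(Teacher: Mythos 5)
Your proposal is correct and follows essentially the same route as the paper: both identify the transition matrix $A$ from the relations $f_i=f_{i-1}+e_{i-1}$, $g_i=g_{i-1}+e_{i-1}$, $e_i=af_{i-1}+(a+b+1)e_{i-1}+bg_{i-1}$, read the $a_{ij}$ off $A^{n-1}$ via Lemma 4.1, compute $C^1$ by summing powers of $A$ with identity (4.2), and substitute into $M_1$ and $T_1$ (you correctly note these, not the reduced $M$ and $T$ of Theorem 3.2, are the forms displayed in Theorem 4.2). Only a bookkeeping nit: the telescoping sum you need runs from $i=0$, so it evaluates to $\tau_{n-1}-\tau_{-1}=\tau_{n-1}+1$, which is exactly where the $\tau_{n-1}+1$ terms in the first and third columns come from.
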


\begin{proof} Let us first consider the polygon ring $R_n(a,b)$.
By Theorem 3.1, we know that $e_1, f_1$ and $g_1$ generate the sandpile group $S(R_n)$. Moreover, by (3.1) and (3.2), for $i>1$, we have
\begin{align*}
  f_i & = f_{i-1}+e_{i-1},\\
  g_i & = e_{i-1}+g_{i-1},\\
  e_i & = e_{i-1}+af_i+bg_i=af_{i-1}+(a+b+1)e_{i-1}+bg_{i-1}.
\end{align*}
Let $\alpha_i=(f_i,e_i,g_i)^t$. Then the above relation can be expressed as $\alpha_i=A\alpha_{i-1}$, where $A$ is the matrix in Lemma 4.1.
So $$\alpha_i=A^{i-1}\alpha_1, \ \  i\geq 1.$$
Furthermore, by (4,1) and (4.2), we have
\begin{align*}
  C^1 &= a(f_1+f_2+\cdots+f_n)\\
      &= \frac{a}{a+b}[(nb+a(\tau_{n-1}+1))f_1+(\tau_n-\tau_{n-1}-1)e_1+(-nb+b(\tau_{n-1}+1))g_1].
\end{align*}
Then the matrices $M_1$ and $T_1$ in  Theorem 3.2 have the form $M$ and $T$ in (1) and (3), respectively.

Part (2) follows from (1) by letting $b=0$ and deleting the third relation.
\end{proof}

From the above theorem, we immediately derive the following corollary.

\begin{cor}
Let $R_n=R_n(a,b)$ be a polygon ring, and let $T_n=T_n(a,b)$ be a twisted polygon ring. The number of spanning trees of these graphs is given by the formula:
$$
  \tau(R_n) = |S(R_n)| = 
\begin{cases}
\frac{nab}{a+b}(\tau_{n+1}-\tau_{n-1}-2)=\frac{nab}{a+b}(\lambda_1^n+\lambda_2^n-2), &\mbox{if } a>0,\ b>0\\
\tau_{n+1}-\tau_{n-1}-2=\lambda_1^n+\lambda_2^n-2,  &\mbox{if } a>0,\ b=0
\end{cases}
$$
and
$$
  \tau(T_n) = |S(T_n)| =
\frac{nab}{a+b}(\tau_{n+1}-\tau_{n-1}-2)+\frac{4nab}{a+b}+\frac{(a-b)^2}{a+b}\tau_n.
$$
\end{cor}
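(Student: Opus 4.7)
The approach rests on two facts: for any connected graph $G$, the order of the sandpile group equals the number of spanning trees, $|S(G)|=\tau(G)$; and for a finite abelian group presented by a square integer relation matrix $A$, the order equals $|\det A|$ (immediate from Theorem~\ref{th:1}). Theorem~\ref{thm:4.2} provides such square relation matrices $N$, $M$, $T$ for $S(R_n(a,0))$, $S(R_n(a,b))$ and $S(T_n(a,b))$, so it suffices to compute their determinants and match them with the claimed formulas.

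The two-generator case $b=0$ is a warm-up: direct expansion gives
\[
\det N = (\tau_n-\tau_{n-1}-1)^2 - a\tau_n(\tau_{n-1}+1).
\]
The key tool is the Cassini-type identity $\tau_n^2+\tau_{n-1}^2-s\tau_n\tau_{n-1}=1$, which follows from $(4.3)$ by substituting $\tau_{n-2}=s\tau_{n-1}-\tau_n$ from the recurrence (here $s=a+2$). It eliminates the quadratic terms and collapses $\det N$ to $\pm(s\tau_n-2\tau_{n-1}-2)=\pm(\tau_{n+1}-\tau_{n-1}-2)$. The translation to the closed form $\lambda_1^n+\lambda_2^n-2$ is then immediate from $(4.1)$ together with $\lambda_i^2-1=\pm\lambda_i\sqrt{s^2-4}$, which give $\tau_{n+1}-\tau_{n-1}=\lambda_1^n+\lambda_2^n$.

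For the $3\times 3$ matrix $M$ in part~(1), I would exploit the symmetric $(a,b)$-block structure: the first and third entries of rows $1$, $2$, $3$ are respectively proportional to $(a,b)$ with the same weight in each row. Pulling $\tfrac{1}{a+b}$ out of rows $1$ and $3$, and then performing a row operation that combines rows $1$ and $3$ into a multiple of row $2$ modulo a correction handled by the summation identity $(4.2)$, reduces the computation to a $2\times 2$-type expression in $\tau_n$, $\tau_{n-1}$. Two or three applications of the Cassini identity above then collapse it to $\pm\tfrac{nab}{a+b}(\tau_{n+1}-\tau_{n-1}-2)$, which gives the stated formula.

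For the twisted case, the cleanest route is to write $T=M+P$ with an explicit perturbation matrix $P$ whose entries are bounded constants or small multiples of $\tau_{n-1}$. Multilinearity of the determinant in rows then yields $\det T = \det M + \Delta$, where $\Delta$ is a sum of a few cross determinants obtained from $M$ by replacing one or more rows by the corresponding rows of $P$. Each cross determinant simplifies via the same Cassini identity and the recurrence, and the contributions assemble into exactly $\tfrac{4nab}{a+b}+\tfrac{(a-b)^2}{a+b}\tau_n$. I expect this last bookkeeping of cross terms to be the main obstacle: the signs of the several cross determinants and the collection of everything over the common denominator $a+b$ leave little margin for an arithmetic slip, but once the individual cross contributions are isolated the remaining calculation is mechanical.
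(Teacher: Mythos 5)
Your proposal is correct and follows essentially the same route as the paper: both reduce the claim to $\tau=|S|=|\det(\text{relation matrix})|$ via Theorem 2.3, take the matrices $N$, $M$, $T$ from Theorem 4.2, and evaluate the determinants by exploiting the $(a,b)$-proportionality of the first and third columns to collapse to a $2\times 2$ computation, which is then finished with the recurrence and identity (4.3) (your Cassini form of it is exactly that identity). The paper likewise writes out only the $a,b>0$ case of $\tau(R_n)$ in full and leaves the $b=0$ and twisted cases to "similar" computations, so your sketch of those parts is at the same level of detail as the original.
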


\begin{proof} We only give the proof for $R_n(a,b)$ with $a>0$, $b>0$. The other cases can be deduced similarly. By Theorem 4.2, $M$ is a relation matrix of $S(R_n)$ for $a>0$, $b>0$. So
\begin{align*}
 \tau(R_n) &= |S(R_n)| = |det(M)| \\
 &= \left|
\begin{array}{ccc}
  \frac{a}{a+b}(\tau_n-\tau_{n-1}-1)&\tau_n&\frac{b}{a+b}(\tau_n-\tau_{n-1}-1)\\
  a(\tau_{n-}+1)&\tau_{n}-\tau_{n-1}-1&b(\tau_{n-1}+1)\\
  \frac{a}{a+b}(nb+a(\tau_{n-1}+1))&\frac{a}{a+b}(\tau_n-\tau_{n-1}-1)& \frac{b}{a+b}(-na+a(\tau_{n-1}+1))
 \end{array}
   \right| \\
 &= \left|
\begin{array}{ccc}
  \frac{a}{a+b}(\tau_n-\tau_{n-1}-1)&\tau_n&0\\
  a(\tau_{n-1}+1)&\tau_{n}-\tau_{n-1}-1&0\\
  \frac{a}{a+b}(nb+a(\tau_{n-1}+1))&\frac{a}{a+b}(\tau_n-\tau_{n-1}-1)& -nb
 \end{array}
    \right| \\
 &=-nab\left(\frac{1}{a+b}(
\tau_n-\tau_{n-1}-1)^2-\tau_n(\tau_{n-1}+1)\right)\\
&=\frac{nab}{a+b}(\tau_{n+1}-\tau_{n-1}-2)\\
&=\frac{nab}{a+b}(\lambda_1^n+\lambda_2^n-2)
\end{align*}
where the next to the last equality is obtained from the recurrence relation and (4.3).
\end{proof}

\noindent
{\bf Remark 2.} Recall that $\tau_n$ is equal to the number of spanning trees of polygon chain $G_{n-1}=G_{n-1}(a+b+2,\ldots, a+b+2)$. As a by-product, we have the following elegant relations:
$$
\tau(R_n(a,b) = \left\{
\begin{aligned}
  &\frac{nab}{a+b}(\tau(G_{n})-\tau(G_{n-2})-2), \ \ \mbox {if}\ \ a>0,\ b>0;\\
 &\tau(G_{n})-\tau(G_{n-2})-2, \ \ \mbox {if}\ \  a>0,\ b=0;
 \end{aligned}
\right.$$
$$\tau(T_n(a,b)) = \tau(R_n(a,b))+\frac{4nab}{a+b}+\frac{(a-b)^2}{a+b}\tau_n.$$
Specially, $$
\tau(T_n(a,a)) = \tau(R_n(a,a))+2na.$$

In the next two sections, we shall determine the sandpile group of $R_n(a,b)$ and $T_n(a,b)$ by computing  the Smith Normal Forms of the matrices $M$ and $T$.

\section{The sandpile group of $R_n(a,b)$}

In this section, we shall focus on the Smith Normal Form of $M$ in Theorem 4.2. First we give some divisibility properties of $\tau_n$. Recall that $\tau_n$ is an integer sequence satisfying $\tau_n=(a+b+2)\tau_{n-1}-\tau_{n-2}$ with initial values $\tau_{-1}=-1$ and $\tau_0=0$. For clarity, we define three new integer sequences $\beta_n, \gamma_n, \delta_n$ that satisfy the same linear recurrence relation as $\tau_n$ with initial values
$$
\aligned
& \beta_0=2,\ \beta_1=a+b+2;\\
& \gamma_0=-1,\ \gamma_1=1;\\
& \delta_0=1,\ \delta_1=a+b+1.
\endaligned
$$

Let $s=a+b+2$ and $t=\sqrt{s^2-4}$. Then the two roots of $x^2-sx+1$ are
$$\lambda_1=\frac{s+t}{2}, \ \ \lambda_2=\frac{s-t}{2}$$
and
$$
\aligned
& \tau_n=\frac{1}{t}(\lambda_1^n-\lambda_2^n),\\
& \beta_n=\lambda_1^n+\lambda_2^n,\\
& \gamma_n=\frac{-(a+b)+t}{2(a+b)}\lambda_1^n-\frac{(a+b)+t}{2(a+b)}\lambda_2^n,\\ &\delta_n=\frac{a+b+4+t}{2(a+b+4)}\lambda_1^n+\frac{a+b+4-t}{2(a+b+4)}\lambda_2^n.
\endaligned 
$$
Furthermore, we write
$$ \rho_n=\frac{\tau_n-\tau_{n-1}-1}{a+b},\ \ s_n=\frac{\tau_{n+1}-\tau_{n-1}-2}{a+b}.$$
Then we have the following results.

\begin{lemma}
\rm{(1)} If $n$ is even, then
$$
\begin{aligned}
&\tau_n=\tau_{\frac{n}{2}}\beta_{\frac{n}{2}}; \\
&\tau_{n-1}+1=\tau_{\frac{n}{2}}\beta_{\frac{n}{2}-1};\\
&\tau_{n-1}-1=\tau_{\frac{n}{2}-1}\beta_{\frac{n}{2}};\\
&\rho_n=\tau_{\frac{n}{2}}\gamma_{\frac{n}{2}};\\
&s_n=(a+b+4)\tau_{\frac{n}{2}}^2.
\end{aligned}
$$

\rm{(2)} If $n$ is odd, then
$$
\begin{aligned}
&\tau_n=\gamma_{\frac{n+1}{2}}\delta_{\frac{n-1}{2}};\\
&\tau_{n-1}+1=\gamma_{\frac{n+1}{2}}\delta_{\frac{n-3}{2}};\\
&\tau_{n-1}-1=\gamma_{\frac{n-1}{2}}\delta_{\frac{n-1}{2}};\\
&\rho_n=\tau_{\frac{n-1}{2}}\gamma_{\frac{n+1}{2}};\\
&s_n=\gamma_{\frac{n+1}{2}}^2.
\end{aligned}
$$
\end{lemma}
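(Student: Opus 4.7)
The strategy is to work entirely with the Binet-type closed forms stated just before the lemma. All four sequences $\tau_n, \beta_n, \gamma_n, \delta_n$ are $\mathbb{Z}$-linear combinations of $\lambda_1^n$ and $\lambda_2^n$, where $\lambda_1+\lambda_2 = s = a+b+2$, $\lambda_1\lambda_2 = 1$, and $t = \lambda_1-\lambda_2$ with $t^2 = s^2-4 = (a+b)(a+b+4)$. The single algebraic fact $\lambda_1^k\lambda_2^k = 1$, together with its consequences $\lambda_1^{k+1}\lambda_2^k = \lambda_1$ and $\lambda_1^k\lambda_2^{k+1} = \lambda_2$, makes every product in the statement collapse. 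A useful preliminary identity is $s_n = \tau_n + 2\rho_n$, immediate from the recurrence $\tau_{n+1} = s\tau_n - \tau_{n-1}$; this lets the $s_n$ formulas be deduced from the $\tau_n$ and $\rho_n$ ones.

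For the even case $n = 2m$, expanding $\tau_m\beta_m = \frac{1}{t}(\lambda_1^m-\lambda_2^m)(\lambda_1^m+\lambda_2^m)$ gives $\tau_{2m}$ in one line, and the identities $\tau_{n-1}+1 = \tau_m\beta_{m-1}$ and $\tau_{n-1}-1 = \tau_{m-1}\beta_m$ follow similarly, the $\pm 1$ arising from cross terms $\lambda_1^m\lambda_2^{m-1} = \lambda_1$ and $\lambda_1^{m-1}\lambda_2^m = \lambda_2$. Subtracting the first two gives $(a+b)\rho_n = \tau_m(\beta_m - \beta_{m-1})$, so $\rho_n = \tau_m\gamma_m$ reduces to the auxiliary identity $\beta_m - \beta_{m-1} = (a+b)\gamma_m$, a one-line Binet check using $1-\lambda_2 = \tfrac{-(a+b)+t}{2}$ and $1-\lambda_1 = \tfrac{-(a+b)-t}{2}$. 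Finally, $s_n = (a+b+4)\tau_m^2$ follows from $(a+b)s_n = \beta_n - 2$ (itself equivalent to $\beta_n = \tau_{n+1}-\tau_{n-1}$, a one-line check using $\lambda_1^2-1 = \lambda_1 t$) together with $\beta_n - 2 = (\lambda_1^m-\lambda_2^m)^2 = t^2\tau_m^2$.

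For the odd case $n = 2m+1$, the central identity is $\tau_n = \gamma_{m+1}\delta_m$. Expanding this product produces four terms. Using $t^2 = (a+b)(a+b+4)$, the coefficients of the pure terms $\lambda_1^{2m+1}$ and $\lambda_2^{2m+1}$ both reduce to $\pm\frac{t}{(a+b)(a+b+4)}$, while the cross terms $\lambda_1^{m+1}\lambda_2^m = \lambda_1$ and $\lambda_1^m\lambda_2^{m+1} = \lambda_2$ combine into $-2s(a+b)(a+b+4) + 2st^2 = 0$; the result is $\tau_n$. The identities $\tau_{n-1}+1 = \gamma_{m+1}\delta_{m-1}$ and $\tau_{n-1}-1 = \gamma_m\delta_m$ are verified in exactly the same style, the $\pm 1$ emerging from nonvanishing cross-term sums (as in the even case). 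The remaining two identities then follow from short Binet verifications of $\delta_m - \delta_{m-1} = (a+b)\tau_m$ (giving $\rho_n = \tau_m\gamma_{m+1}$ via the same subtraction argument as before) and $\delta_m + 2\tau_m = \gamma_{m+1}$ (giving $s_n = \gamma_{m+1}^2$ via $s_n = \tau_n + 2\rho_n = \gamma_{m+1}(\delta_m + 2\tau_m)$).

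The main obstacle is the algebraic bookkeeping in the odd case, in particular the cancellation of the cross terms in the product $\gamma_{m+1}\delta_m$. That cancellation is the whole combinatorial content of the lemma: the precise coefficients in the Binet forms of $\gamma$ and $\delta$ are engineered so that $t^2 = (a+b)(a+b+4)$ forces the required collapses. None of the ten sub-identities is deep on its own, but the coefficient computations — especially the four numerators of the form $[\pm(a+b)+t][(a+b+4)\pm t]$ — must be kept in order.
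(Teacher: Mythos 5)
Your proposal is correct and follows essentially the same route as the paper, which also verifies all ten identities directly from the Binet closed forms using $\lambda_1\lambda_2=1$, $\lambda_1+\lambda_2=s$, and $\lambda_1-\lambda_2=t$ (the paper simply leaves the expansions to the reader). Your version just supplies the explicit coefficient computations, including the key cross-term cancellations governed by $t^2=(a+b)(a+b+4)$.
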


\begin{proof}
Using the facts $\lambda_1\lambda_2 = 1$, $\lambda_1+\lambda_2=s$, $\lambda_1-\lambda_2=t$, the identities are easy to check.
\end{proof}

\begin{lemma}
For $\tau_n, \beta_n, \gamma_n, \delta_n$ as defined above, we have
$$gcd(\tau_n,\tau_{n-1})=gcd(\gamma_n,\gamma_{n-1})=gcd(\delta_n,\delta_{n-1})=1$$
and 
$$
gcd(\beta_n,\beta_{n-1}) = 
\begin{cases} 1, &\mbox{if } a+b \mbox{ is odd},\\  2, &\mbox{if } a+b \mbox{ is even}.\end{cases}
$$
\end{lemma}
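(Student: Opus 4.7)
The plan is to reduce all four statements to a single observation about second-order linear recurrences of the form $x_n = s x_{n-1} - x_{n-2}$ (with $s = a+b+2$), namely that the $\gcd$ of two consecutive terms is a recurrence invariant, and then to read off the answers from the initial values of each sequence.

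First I would record the invariance lemma: if $(x_n)$ satisfies $x_n = s x_{n-1} - x_{n-2}$, then $\gcd(x_n, x_{n-1}) = \gcd(x_{n-1}, x_{n-2})$. This is immediate, since $x_{n-2} = s x_{n-1} - x_n$ shows that every common divisor of $x_n$ and $x_{n-1}$ divides $x_{n-2}$, and symmetrically the recurrence itself shows that every common divisor of $x_{n-1}$ and $x_{n-2}$ divides $x_n$. Iterating, $\gcd(x_n, x_{n-1})$ equals $\gcd$ of the two base values for every $n$ in the range of definition. All four sequences $\tau_n, \beta_n, \gamma_n, \delta_n$ satisfy this recurrence (with the same $s$), so the lemma applies uniformly.

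Next I would simply evaluate these gcds on the initial pairs. For $\tau_n$, the initial values are $\tau_{-1} = -1$ and $\tau_0 = 0$, so $\gcd(\tau_0, \tau_{-1}) = 1$, hence $\gcd(\tau_n, \tau_{n-1}) = 1$ for all $n \geq 0$. For $\gamma_n$, we have $\gamma_0 = -1$ and $\gamma_1 = 1$, giving $\gcd(\gamma_1, \gamma_0) = 1$. For $\delta_n$, we have $\delta_0 = 1$ and $\delta_1 = a+b+1$, so again $\gcd(\delta_1, \delta_0) = 1$. For $\beta_n$, the initial values $\beta_0 = 2$ and $\beta_1 = a+b+2$ give
\[
\gcd(\beta_1, \beta_0) = \gcd(a+b+2, 2) = \gcd(a+b, 2),
\]
which is $1$ when $a+b$ is odd and $2$ when $a+b$ is even, matching the stated case split.

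There is no real obstacle; the only mild subtlety is the indexing for $\tau$, which is defined from $n = -1$, so one should verify the invariance step applies already to $\gcd(\tau_1, \tau_0) = \gcd(\tau_0, \tau_{-1})$. This follows since $\tau_1 = s\tau_0 - \tau_{-1} = 1$, and more generally the recurrence $\tau_n = s\tau_{n-1} - \tau_{n-2}$ is valid for all $n \geq 1$, so the chain $\gcd(\tau_n,\tau_{n-1}) = \gcd(\tau_{n-1},\tau_{n-2}) = \cdots = \gcd(\tau_0,\tau_{-1}) = 1$ is legitimate.
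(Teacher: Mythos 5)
Your proposal is correct and follows essentially the same route as the paper: both rest on the observation that $\gcd(x_n,x_{n-1})$ is invariant under the recurrence $x_n=(a+b+2)x_{n-1}-x_{n-2}$, and then evaluate the gcd on the initial pairs of each sequence. You simply spell out the initial-value computations that the paper leaves implicit.
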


\begin{proof} 
We only need to notice that, for any sequence $x_n$ satisfying $x_n=(a+b+2)x_{n-1}-x_{n-2}$,
$$gcd(x_n, x_{n-1})=gcd(x_{n-1},x_{n-2})=\cdots=gcd(x_1,x_0).$$
Then the results follow directly.
\end{proof}

\begin{thm}\label{thm:5.3} Let $a>0$ be an integer. The sandpile group of $R_n(a,0)$ is equal to
$$
S(R_n(a,0))=\left\{
\begin{aligned}
\mathbb{Z}_{\tau_{n}}\oplus \mathbb{Z}_{a(a+4)\tau_{n}},\ \ \  &\mbox{if $n$ is even and $a$ is odd};\\
\mathbb{Z}_{2\tau_{n}}\oplus \mathbb{Z}_{\frac{a(a+4)}{2}}\tau_{n},\ \ \  &\mbox{if $n$ is even and $a$ is even};\\
\mathbb{Z}_{\gamma_{m+1}}\oplus \mathbb{Z}_{a\gamma_{m+1}},\ \ \  &\mbox{if $n=2m+1$ is  odd}.
\end{aligned}
\right.
$$
\end{thm}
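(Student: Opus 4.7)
The plan is to compute the Smith Normal Form of the $2\times 2$ relation matrix $N$ from Theorem \ref{thm:4.2}(2). For any $2\times 2$ integer matrix the SNF is $\operatorname{diag}(d_1,d_2)$ with $d_1$ equal to the gcd of all entries and $d_2=|\det N|/d_1$, so the task reduces to evaluating those two invariants in each parity case.

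The first step is to rewrite the diagonal entries via $\tau_n-\tau_{n-1}-1=a\rho_n$ and apply the factorizations in Lemma 5.1 (specialized to $b=0$). For $n=2m$ this yields $a\rho_n=a\tau_m\gamma_m$, $\tau_n=\tau_m\beta_m$, and $a(\tau_{n-1}+1)=a\tau_m\beta_{m-1}$, so $\tau_m$ is a common factor of every entry and the computation of $d_1$ reduces to $\tau_m\cdot\gcd(a\gamma_m,\beta_m,a\beta_{m-1})$. For $n=2m+1$ the analogous factorization exhibits $\gamma_{m+1}$ as a common factor, leaving $\gcd(a\tau_m,\delta_m,a\delta_{m-1})$ to compute. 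A short induction (or a direct closed-form calculation using $\lambda_1\lambda_2=1$) establishes the two determinant identities $\beta_m\beta_{m-1}-a\gamma_m^2=a+4$ and $\delta_m\delta_{m-1}-a\tau_m^2=1$, which give $|\det N|=a(a+4)\tau_m^2$ in the even-$n$ case and $|\det N|=a\gamma_{m+1}^2$ in the odd-$n$ case.

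For the inner gcds, the key inputs are Lemma 5.2 together with two modular identities, $\beta_m\equiv 2\pmod a$ and $\delta_m\equiv 1\pmod a$, both proved by induction on the recurrence $x_m=sx_{m-1}-x_{m-2}$. In the odd-$n$ case, $\gcd(\delta_m,\delta_{m-1})=1$ and $\gcd(\delta_m,a)=1$ together force $\gcd(a\tau_m,\delta_m,a\delta_{m-1})=1$, giving $d_1=\gamma_{m+1}$ and $d_2=a\gamma_{m+1}$. In the even-$n$, odd-$a$ subcase, $\gcd(\beta_m,\beta_{m-1})=1$ by Lemma 5.2, and $\gcd(\beta_m,a)\mid 2$ from the modular identity; since $a$ is odd, this gcd equals $1$, so the inner gcd is $1$, yielding $d_1=\tau_m$ and $d_2=a(a+4)\tau_m$.

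The main obstacle is the even-$n$, even-$a$ subcase, where Lemma 5.2 only yields $\gcd(\beta_m,\beta_{m-1})=2$ and every entry of $N$ is already even, so one must verify that the inner gcd is exactly $2$ rather than a higher power of $2$. My plan is to set $a=2a'$ and introduce $B_m=\beta_m/2$, which is an integer satisfying $B_m=sB_{m-1}-B_{m-2}$ with $B_0=1$ and $B_1=a'+1$. Since $s\equiv 2\pmod{a'}$, induction yields $B_m\equiv 1\pmod{a'}$, so $\gcd(B_m,a')=1$; combined with $\gcd(B_m,B_{m-1})=1$ (which follows from $\gcd(\beta_m,\beta_{m-1})=2$), this forces $\gcd(a'\gamma_m,B_m,a'\beta_{m-1})=1$. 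Restoring the factor $2$ then gives an inner gcd of exactly $2$, whence $d_1=2\tau_m$ and $d_2=\frac{a(a+4)}{2}\tau_m$, completing the determination of $S(R_n(a,0))$.
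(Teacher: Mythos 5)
Your overall strategy---computing the Smith Normal Form of the $2\times 2$ matrix $N$ via $d_1=\gcd(\text{entries})$ and $d_2=|\det N|/d_1$, with the factorizations of Lemma 5.1 and the coprimality facts of Lemma 5.2---is the same as the paper's, and your determinant identities and congruences check out. (Note that your conclusion $d_1=\tau_m$, $d_2=a(a+4)\tau_m$ for $n=2m$ is the correct one; the $\tau_n$ in the theorem statement is evidently a typo for $\tau_{n/2}$, as the wheel-graph case in Remark 3 and the value $|\det N|=a(a+4)\tau_{n/2}^2$ confirm.) The paper's proof is much shorter because of one observation you miss: the diagonal entry of $N$ is $\tau_n-(\tau_{n-1}+1)$, so any common divisor of it and of $\tau_n$ already divides $\tau_{n-1}+1$, whence $\Delta_1=\gcd(\tau_n,\tau_{n-1}+1)$ with no stray factor of $a$. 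Lemma 5.1 then gives $\Delta_1=\tau_m\gcd(\beta_m,\beta_{m-1})$ or $\gamma_{m+1}\gcd(\delta_m,\delta_{m-1})$, and Lemma 5.2 settles all three cases at once, with no need for the congruences $\beta_m\equiv 2$, $\delta_m\equiv 1\pmod a$ or the $B_m$ device. Likewise $\Delta_2=|\det N|=\tau_{n+1}-\tau_{n-1}-2=a\,s_n$ can be read off from Corollary 4.3 together with the $s_n$ entries of Lemma 5.1, so your two determinant identities re-derive facts already on record.

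One step is incompletely justified, and it is exactly in the subcase you flag as the main obstacle. You claim that $\gcd(B_m,a')=1$ together with $\gcd(B_m,B_{m-1})=1$ forces $\gcd(a'\gamma_m,\,B_m,\,a'\beta_{m-1})=1$. It does not quite: a common divisor $d$ satisfies $\gcd(d,a')=1$, hence $d\mid\gamma_m$ and $d\mid\beta_{m-1}=2B_{m-1}$, and combined with $d\mid B_m$ and $\gcd(B_m,B_{m-1})=1$ this only yields $d\mid 2$. Since $\beta_{m-1}$ is always even here and $B_m$ can be even (for $a=2$, $m=1$ one has $B_1=2$), the possibility $d=2$ is not excluded by the facts you cite. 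You need the additional observation that $\gamma_m$ is odd when $a$ is even (immediate by induction from $\gamma_0=-1$, $\gamma_1=1$ and the recurrence with even $s=a+2$), which rules out $d=2$ because $d\mid\gamma_m$. With that one line added, your argument is complete.
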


\begin{proof} By Theorem 4.2, the relation matrix of $R_n(a,0)$ is  
$$N=\left(
\begin{array}{cc}
 \tau_n-\tau_{n-1}-1&\tau_n\\
  a(\tau_{n-1}+1)& \tau_{n}-\tau_{n-1}-1\\
  \end{array}
\right)=\left(
\begin{array}{cc}
 \tau_n-\tau_{n-1}-1&\tau_n\\
  a\tau_n& (a+1)\tau_n-\tau_{n-1}-1\\
  \end{array}
\right).$$
So the determinant factors of $N$ are $\Delta_1=gcd(\tau_n, \tau_{n-1}+1)$ and $\Delta_2=\tau_{n+1}-\tau_{n-1}-2$. Then the result follows directly from Theorem 2.3 and Lemmas 5.1--5.2.
\end{proof}

\noindent
{\bf Remark 3.} Note that $R_n(1,0)$ is just the wheel graph $W_n$, whose sandpile group has been determined before, see  \cite{Biggs1999}:
$$S(W_n)=\left\{
\begin{aligned}
&\mathbb{Z}_{\eta_{n}}\oplus \mathbb{Z}_{5\eta_n},\ \ \ \mbox{if $n$ is even;}\\
&\mathbb{Z}_{\eta_{n+1}+\eta_{n-1}}\oplus \mathbb{Z}_{\eta_{n+1}+\eta_{n-1}},\ \ \ \mbox{if $n$ is odd;}\\
\end{aligned}
\right.
$$
where $\eta_n=\frac{1}{\sqrt{5}}((\frac{1+\sqrt{5}}{2})^n-(\frac{1-\sqrt{5}}{2})^n)$ are Fibonacci numbers. In \cite{Raza2015On}, the authors considered the sandpile group of subdivided wheel graphs isomorphic to $R_n(2,0)$. It is easy to check that our results are consistent with the known results in both cases $a=1$ and $a=2$. Here, we obtain the general result by using a completely different method. Theorem 5.3 also tells us that the minimum number of generators of $S(R_n(a,0))$ is $2$ if $n\geq 3$.

Now for the sandpile groups of $R_n(a,b)$  with $a\geq b>0$.   First we simplify the relation matrix $M$  in Theorem 4.2 as follows: first  adding $a+b$ times and $a$ times the first row to the second and the last row, respectively, we have
\begin{align*}
M &\sim \left(
\begin{array}{ccc}
  a\cdot\frac{\tau_n-\tau_{n-1}-1}{a+b}&\tau_n&b\cdot\frac{\tau_n-\tau_{n-1}-1}{a+b}\\
  a\tau_{n}&\tau_{n+1}-\tau_{n}-1&b\tau_{n}\\
  a\tau_n-ab\cdot\frac{\tau_n-n}{a+b}&a\cdot\frac{\tau_{n+1}-\tau_{n}-1}{a+b}& ab\cdot\frac{\tau_n-n}{a+b}
 \end{array}
\right) \\
 &=\left(
\begin{array}{ccc}
  a\rho_n&\tau_n& b\rho_n\\
  a\tau_{n}&(a+b)\rho_{n+1}&b\tau_{n}\\
  a\tau_n-ab\tau'_n&a\rho_{n+1}& ab\tau'_{n}
 \end{array}
\right)=M',
\end{align*}
where  $\tau'_n=\frac{\tau_n-n}{a+b}=\rho_n+\rho_{n-1}+\cdots+\rho_1$ are integers.

In order to obtain the determinant factors of $M'$, we compute all order-$2$ minors of $M'$. Let $M'_{ij;kl}$ denote the minor corresponding to rows $i,j$ and columns $k,l$. First, by using (3.5) and $ \tau_{n+1}=(a+b+2)\tau_n-\tau_{n-1},$ it is easy to check that
$$\rho_{n+1}=\tau_n+\rho_n;\ \ \tau_n^2-(a+b)\rho_n\rho_{n+1}=s_n. \eqno (5.1).$$
Bearing in mind (5.1) and $\tau_n-(a+b)\tau'_n=n$, we have
$$\aligned
&M'_{12;12}=-a s_n;\ \ M'_{12;13}=0; \ \ M'_{12;23}=b s_n;\\
&M'_{13;12}=-a\cdot \frac{as_n+nb\tau_n}{a+b}=-as_n+ab\cdot\frac{s_n-n\tau_n}{a+b};\\
& M'_{13;13}=-nab\rho_n;\ \
M'_{13;23}=ab\cdot\frac{s_n-n\tau_n}{a+b};\\ &M'_{23;12}=-nab\rho_{n+1}=-nab(\tau_n+\rho_n);\ \
M'_{23;13}=-nab\tau_n;\ \ M'_{23;23}=-nab\rho_{n+1}.
\endaligned
$$

So it is not difficult to deduce that
 $$\aligned &\Delta_1(M')=gcd(\tau_n, a\rho_n, \ a\rho_n, \ ab\tau'_n) = gcd(\tau_n, \ \tau_{n-1}+1, \ a\rho_n, \ ab\tau'_n);\\
 &\Delta_2(M')=gcd (nab\tau_n, \ nab\rho_n,\ a s_n, \ b s_n,\ ab\cdot\frac{s_n-n\tau_n}{a+b});\\
 &\Delta_3(M')=nab\cdot s_n.
 \endaligned
 $$

\begin{lemma}
Suppose that $\tau_n, \beta_n, \gamma_n$ and $\delta_n$ are as above. Let
$$ \tau'_n=\frac{\tau_n-n}{a+b},\ \ \beta'_n=\frac{\beta_n-2}{a+b},\ \ \gamma'_n=\frac{\gamma_n-2n+1}{a+b},\textrm{ and } \delta'_n=\frac{\delta_n-1}{a+b}.$$
Then $\tau'_n, \beta'_n, \gamma'_n$ and $\delta'_n$ are integers.
\end{lemma}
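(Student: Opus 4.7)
The plan is to reduce the common recurrence $x_n=(a+b+2)x_{n-1}-x_{n-2}$ modulo $a+b$, which simplifies to the linear recurrence $x_n\equiv 2x_{n-1}-x_{n-2}\pmod{a+b}$. This is precisely the recurrence satisfied by arithmetic progressions, so any integer sequence $(x_n)$ obeying it is congruent modulo $a+b$ to an arithmetic progression determined by its first two terms.

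Concretely, I would first verify that $\tau_n$, $\beta_n$, $\gamma_n$, $\delta_n$ are all integers (immediate from integer initial values and an integer recurrence). Then I would apply the modular observation above to each sequence separately and check the residues of the initial terms modulo $a+b$:
\begin{itemize}
\item For $\tau_n$: $\tau_0=0$ and $\tau_1=1$, so by induction $\tau_n\equiv n\pmod{a+b}$, giving $\tau'_n\in\mathbb{Z}$.
\item For $\beta_n$: $\beta_0=2$ and $\beta_1=a+b+2\equiv 2\pmod{a+b}$, so $\beta_n\equiv 2\pmod{a+b}$, giving $\beta'_n\in\mathbb{Z}$.
\item For $\gamma_n$: $\gamma_0=-1$ and $\gamma_1=1$, so $\gamma_n\equiv 2n-1\pmod{a+b}$, giving $\gamma'_n\in\mathbb{Z}$.
\item For $\delta_n$: $\delta_0=1$ and $\delta_1=a+b+1\equiv 1\pmod{a+b}$, so $\delta_n\equiv 1\pmod{a+b}$, giving $\delta'_n\in\mathbb{Z}$.
\end{itemize}
Each induction step uses $x_n-\varphi(n)=(a+b)x_{n-1}+(2x_{n-1}-x_{n-2}-\varphi(n))$, where $\varphi(n)$ is the candidate arithmetic progression ($n$, $2$, $2n-1$, $1$ respectively); the second summand vanishes by the induction hypothesis applied at $n-1,n-2$ (using that $\varphi$ itself satisfies $2\varphi(n-1)-\varphi(n-2)=\varphi(n)$), and the first is visibly divisible by $a+b$.

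There is essentially no obstacle: the only point requiring care is checking that each chosen $\varphi$ does satisfy the reduced recurrence $\varphi(n)=2\varphi(n-1)-\varphi(n-2)$, which is a one-line verification in each of the four cases ($n$, $2$, $2n-1$, $1$ are all arithmetic progressions).
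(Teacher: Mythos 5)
Your proposal is correct and is essentially the paper's own argument in more explicit form: the paper notes that each sequence is a polynomial in $a+b$ with constant term $n$, $2$, $2n-1$, or $1$ respectively, which is exactly your statement that the sequence is congruent to that arithmetic progression modulo $a+b$. Your verification that the reduced recurrence $x_n\equiv 2x_{n-1}-x_{n-2}\pmod{a+b}$ is the arithmetic-progression recurrence, together with the check of the initial residues, fills in the details the paper leaves to the reader.
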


\begin{proof} 
Note that $\tau_n, \beta_n, \gamma_n$ and $\delta_n$ all satisfy the recurrence relation $x_n=(a+b+2)x_{n-1}-x_{n-2}$. So they can be expressed as the polynomials of $a+b$. It is easy to show that their constant terms are $n, 2, 2n-1$ and $1$, respectively. So the results follow directly.
\end{proof}

\begin{thm}\label{thm:5.5}
Let $a\geq b>0$ be integers. Then the sandpile group of $R_n(a,b)$ is equal to
$$
S(R_n(a,b))=\mathbb{Z}_{\Delta_1}\oplus \mathbb{Z}_{\frac{\Delta_2}{\Delta_1}}\oplus \mathbb{Z}_{\frac{\Delta_3}{\Delta_2}},
$$
where
$$
\Delta_1=
\begin{cases}
 gcd(gcd(2,a,b) \tau_{m}, \ 2ab\tau'_m), &\mbox{if } n=2m \mbox{ is even,} \\
 gcd(\gamma_{m+1}, \ ab\gamma'_{m+1}), &\mbox{if } n=2m+1 \mbox{ is odd};
\end{cases}
$$
$$
\Delta_2 =
\begin{cases}
 \tau_{m}\cdot gcd(gcd(a,b)(a+b+4)\tau_{m}, ab\cdot gcd(2m,(a+b+4)\tau'_m+m)), &\mbox{if } n=2m \hbox{ is even;} \\
 \gamma_{m+1}\cdot gcd ( gcd(a,b)\gamma_{m+1}, ab\cdot gcd(2m+1,\gamma'_{m+1})), &\mbox{if } n=2m+1 \hbox{ is odd};
\end{cases}
$$
and
$$\Delta_3 =
\begin{cases}
 nab(a+b+4)\tau_{m}^2, &\mbox{if } n=2m \hbox{ is even;} \\ 
 nab\gamma_{m+1}^2, &\mbox{if } n=2m+1 \hbox{ is odd}.
\end{cases}
$$
\end{thm}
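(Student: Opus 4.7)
\medskip
\noindent\textbf{Proof proposal.} The plan is to start from the simplified relation matrix $M'$ and the expressions for the determinant divisors
$$\Delta_1(M')=\gcd(\tau_n,\tau_{n-1}+1, a\rho_n, b\rho_n, ab\tau'_n),\ \ \Delta_2(M'), \ \ \Delta_3(M')=nab\cdot s_n$$
obtained immediately before the statement, then apply Lemma 5.1 to rewrite the relevant sequences in terms of the ``half-index'' sequences $\tau_m,\beta_m,\gamma_m,\delta_m$ and Lemma 5.2 to dispose of coprimality questions. Theorem 2.3 then produces the direct sum decomposition
$S(R_n(a,b))\cong \ZZ_{\Delta_1}\oplus \ZZ_{\Delta_2/\Delta_1}\oplus \ZZ_{\Delta_3/\Delta_2}$.

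I would start with $\Delta_3$, which is immediate: substituting $s_n=(a+b+4)\tau_m^2$ (for $n=2m$) or $s_n=\gamma_{m+1}^2$ (for $n=2m+1$) into $\Delta_3=nab\cdot s_n$ gives exactly the formula in the statement.

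The bulk of the work is on $\Delta_1$ and $\Delta_2$, handled in parallel for the two parities of $n$. For $\Delta_1$, I would first note $\gcd(a\rho_n,b\rho_n)=\gcd(a,b)\rho_n$ and $(a+b)\rho_n=\tau_n-\tau_{n-1}-1$, which show that the $\tau_{n-1}+1$ term is redundant once the other four terms are kept. In the even case $n=2m$, Lemma 5.1 turns the gcd into
$\gcd\bigl(\tau_m\beta_m,\ \gcd(a,b)\tau_m\gamma_m,\ ab\tau'_n\bigr)$; factoring $\tau_m$ out uses Lemma 5.2 ($\gcd(\beta_m,\beta_{m-1})=\gcd(2,a+b)$ together with $\gcd(\tau_m,\beta_{m-1})=1$) plus the congruences $\beta_m\equiv 2\pmod{a+b}$ and $\gamma_m\equiv 2m-1\pmod{a+b}$ coming from Lemma 5.4. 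The key reduction $\gcd(\beta_m,\gcd(a,b)\gamma_m)=\gcd(2,a,b)$ (proved by separating cases on the parity of $a+b$, using that $\beta_m$ is always even when $a+b$ is even and that $\gamma_m$ is always odd in that subcase) then yields $\Delta_1=\gcd(\gcd(2,a,b)\tau_m,\ 2ab\tau'_m)$, the factor $2$ in $2ab\tau'_m$ coming from $\tau'_n=\tau_m\beta'_m\cdot(\ldots)+2\tau'_m$-type rewriting using Lemma 5.4. In the odd case $n=2m+1$, the substitutions $\tau_n=\gamma_{m+1}\delta_m$, $\tau_{n-1}+1=\gamma_{m+1}\delta_{m-1}$, $\rho_n=\tau_m\gamma_{m+1}$ and the coprimalities $\gcd(\delta_m,\delta_{m-1})=\gcd(\tau_m,\gamma_{m+1})=1$ from Lemma 5.2 allow one to pull $\gamma_{m+1}$ cleanly out, leaving the stated $\gcd(\gamma_{m+1},ab\gamma'_{m+1})$ after expressing $\tau'_n$ through $\gamma'_{m+1}$ using Lemma 5.4.

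The computation for $\Delta_2$ follows the same template: substitute the Lemma 5.1 identities into
$\gcd(nab\tau_n, nab\rho_n, as_n, bs_n, ab\cdot \tfrac{s_n-n\tau_n}{a+b})$, factor $\tau_m$ (resp.\ $\gamma_{m+1}$) out using coprimality, and rewrite the last entry via the integers $\tau'_m,\beta'_m,\gamma'_{m+1}$ of Lemma 5.4 to obtain the mixed gcd $ab\cdot\gcd(2m,(a+b+4)\tau'_m+m)$ (even case) or $ab\cdot\gcd(2m+1,\gamma'_{m+1})$ (odd case). The main obstacle throughout is bookkeeping: one must keep careful track of divisibility by $a+b$ (to ensure each ``primed'' quantity is an integer) and by $2$ (to locate the factor $\gcd(2,a,b)$ in the even case), and must repeatedly use the identity $\tau_{n-1}^2-\tau_n\tau_{n-2}=1$ (equation (4.3)) together with $\rho_{n+1}=\tau_n+\rho_n$ to eliminate redundant entries of $M'$ from the gcd. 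Once $\Delta_1,\Delta_2,\Delta_3$ are in the stated forms, Theorem 2.3(2) finishes the proof.
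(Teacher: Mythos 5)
Your proposal follows essentially the same route as the paper: compute the determinant divisors of the simplified matrix $M'$, substitute the half-index factorizations of Lemma 5.1, use the coprimality facts of Lemma 5.2 and the integrality of the primed sequences to pull out $\tau_m$ (resp.\ $\gamma_{m+1}$) and reduce the remaining gcd's, then invoke Theorem 2.3. Your organization of the even-case $\Delta_1$ reduction via $\gcd(\beta_m,\gcd(a,b)\gamma_m)=\gcd(2,a,b)$ is a correct, minor repackaging of the paper's parity case analysis (and you also correctly include the $b\rho_n$ entry that the paper's displayed gcd omits by an apparent typo).
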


\begin{proof} We only need to show that the determinant factors of $M'$ have the claimed form in the theorem.

(i) If $n=2m$ is even, then by Lemma 5.1, we have
$$
\aligned &\tau_n=\tau_{m}\beta_{m};\ \  \tau_{n-1}+1=\tau_{m}\beta_{m-1},\\
&\rho_n=\tau_{m}\gamma_{m};\ \  s_n=(a+b+4)\tau_{m}^2.
\endaligned
$$
Recall from Lemma 5.2 that
$gcd(\beta_n, \beta_{n-1})=2$ if $a+b+2$ is even, and $gcd (\beta_n, \beta_{n-1})=1$, otherwise.  And $\gamma_n$ are all odd. Combining the above facts together, we deduce that
\begin{align*}
\Delta_1(M')&=gcd (\tau_n, \ \tau_{n-1}+1, \ a\rho_n, \ ab\tau'_n)\\
&=\begin{cases}
  gcd (gcd(2,a) \tau_{m}, \ ab\tau'_n), &\mbox{if } a+b \mbox{ is even}; \\
  gcd (\tau_{m}, \ ab\tau'_n), &\mbox{if } a+b \mbox{ is odd}.
\end{cases} \\
&=gcd (gcd(2,a,b) \tau_{m}, \ ab\tau'_n)\\
&=gcd(gcd(2,a,b) \tau_{m}, \ 2ab\tau'_m).
\end{align*}
The last equality was obtained by using Lemma 5.3, since
$$\tau'_n=\frac{\tau_{2m}-2m}{a+b}=\frac{\tau_{m}\beta_m-2m}{a+b}=\frac{\tau_{m}((a+b)\beta'_m+2)-2m}{a+b}=\tau_m\beta'_m+2\tau'_m.$$
From the fact that $gcd(\tau_n, (a+b)\rho_n)=gcd(\tau_n, \tau_{n-1}+1)=\tau_{m}\cdot gcd(2,a+b)$, we deduce that $gcd(\tau_n, \rho_n) = \tau_{m}\cdot gcd(\beta_{m},\gamma_{m}) = \tau_m$. So,
$$\Delta_2(M')=\tau_{m}\cdot gcd (nab, \ gcd(a,b)(a+b+4)\tau_{m},~ab\cdot \frac{(a+b+4)\tau_m-n\beta_m}{a+b}).$$
Using Lemma 5.3 again, we have
$$\frac{(a+b+4)\tau_m-n\beta_m}{a+b}=\frac{(a+b+4)((a+b)\tau'_m+m)-n((a+b)\beta'_m+2)}{a+b}=(a+b+4)\tau'_m+m-n\beta'_m.$$
Hence, 
\begin{align*}
\Delta_2(M')&=\tau_{m}\cdot gcd (\ gcd(a,b)(a+b+4)\tau_{m},\, ab\cdot gcd(2m,(a+b+4)\tau'_m+m)),\\
\Delta_3(M')&=nab(a+b+4)\tau_{m}^2. 
\end{align*}

(ii) If $n=2m+1$ is odd, then by Lemma 5.1 again, we have
$$\aligned &\tau_n=\gamma_{m+1}\delta_{m},\ \  \tau_{n-1}+1=\gamma_{m+1}\delta_{m-1},\\
&\rho_n=\gamma_{m+1}\tau_{m},\ \  s_n=\gamma_{m+1}^2.\\
\endaligned
$$
In this case, first note that
$$gcd(\tau_n, (a+b)\rho_n)=gcd(\tau_n, \tau_{n-1}+1)=\gamma_{m+1}=gcd(\tau_n,\rho_n)$$ and
$$\tau'_n=\frac{\gamma_{m+1}\delta_m-2m-1}{a+b}=\frac{\gamma_{m+1}((a+b)\delta'_m+1)-2m-1}{a+b}=\gamma_{m+1}\delta'_m+\gamma'_{m+1}.$$
So
\begin{align*}
\Delta_1(M')&=gcd (\gamma_{m+1}, \ ab\tau'_n)=gcd (\ \gamma_{m+1}, \ ab\gamma'_{m+1});\\
 \Delta_2(M')&=\gamma_{m+1}\cdot gcd (nab, \ gcd(a,b)\cdot \gamma_{m+1}, \ ab\cdot\frac{\gamma_{m+1}-n\delta_m}{a+b})\\
 &=\gamma_{m+1}\cdot gcd(nab,\ gcd(a,b)\cdot \gamma_{m+1}, ab\cdot\frac{(a+b)\gamma'_{m+1}+2m+1-n((a+b)\delta'_m+1)}{a+b})\\
 &=\gamma_{m+1}\cdot gcd(\ gcd(a,b)\cdot \gamma_{m+1}, ab\cdot gcd(n,\gamma'_{m+1}));\\
 \Delta_3(M')&=nab\gamma_{m+1}^2.
\end{align*}
This completes the proof.
\end{proof}

Note that Theorem 5.3 can be viewed as a special case of Theorem 5.5 with $b=0$. Considering two cases, $a=b$ and $a>b=1$, we obtain the following corollaries.
\begin{cor}
Let $a>0$ be an integer. Then for the sandpile group of $R_n(a,a)$, we have
$$S(R_n(a,a))=\mathbb{Z}_{\Delta_1}\oplus \mathbb{Z}_{\frac{\Delta_2}{\Delta_1}}\oplus \mathbb{Z}_{\frac{\Delta_3}{\Delta_2}},$$
where
\begin{align*}
\Delta_1&=
\begin{cases}
gcd(gcd(2,a)\tau_{m}, \ am), &\mbox{if } n=2m; \\
gcd(\gamma_{m+1}, \ a(2m+1)), &\mbox{if } n=2m+1.
\end{cases}
\\
\Delta_2&=
\begin{cases}
a\tau_{m}\cdot gcd(4m, \ 2ma, (a+2)\tau_m-2m), &\mbox{if } n=2m; \\
a\gamma_{m+1}\cdot gcd(2m+1, \gamma_{m+1}), &\mbox{if } n=2m+1 \mbox{ is odd;}
\end{cases}
\\
\Delta_3&=
\begin{cases}
4ma^2(a+2)\tau_{m}^2, &\mbox{if } n=2m; \\
(2m+1)a^2\gamma_{m+1}^2, &\mbox{if } n=2m+1.
\end{cases}
\end{align*}
\end{cor}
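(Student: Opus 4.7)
The plan is to derive this corollary as a direct specialization of Theorem 5.5 with $b=a$, so that $a+b=2a$ and $a+b+4=2(a+2)$. The main technical tools are the identities $2a\tau'_n=\tau_n-n$ and $2a\gamma'_n=\gamma_n-2n+1$ that come straight from the definitions in Lemma 5.3, together with the fact that every $\gamma_n$ is odd in this setting: since $a+b=2a$ is even, the recurrence reduces modulo $2$ to $\gamma_n\equiv-\gamma_{n-2}\pmod{2}$, and the base values $\gamma_0=-1$, $\gamma_1=1$ are both odd.

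For $n=2m$ even, I would first rewrite $2a^2\tau'_m$ as $a(\tau_m-m)$. Since $\gcd(2,a)$ divides $a$, the term $a\tau_m$ is a multiple of $\gcd(2,a)\tau_m$, so $\Delta_1=\gcd(\gcd(2,a)\tau_m,\,a(\tau_m-m))=\gcd(\gcd(2,a)\tau_m,\,am)$, matching the claim. For $\Delta_2$, multiplying the inner gcd through by $a$ and using the same identity converts $2a(a+2)\tau'_m+am$ into $(a+2)\tau_m-2m$, giving $\gcd(2am,\,(a+2)\tau_m-2m)$ inside. The outer gcd with $2(a+2)\tau_m$ then simplifies via $2(a+2)\tau_m=2\bigl((a+2)\tau_m-2m\bigr)+4m$, so $2(a+2)\tau_m$ may be replaced by $4m$, yielding $\gcd(4m,\,2am,\,(a+2)\tau_m-2m)$ as desired. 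The formula for $\Delta_3$ is a direct substitution.

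For $n=2m+1$ odd, the oddness of $\gamma_{m+1}$ gives $\gcd(\gamma_{m+1},2)=1$, so from $2a\gamma'_{m+1}=\gamma_{m+1}-(2m+1)$ I extract $\gcd(\gamma_{m+1},\,a\gamma'_{m+1})=\gcd(\gamma_{m+1},\,2m+1)$. This immediately simplifies $\Delta_1=\gcd(\gamma_{m+1},\,a^2\gamma'_{m+1})$ to $\gcd(\gamma_{m+1},\,a(2m+1))$. For $\Delta_2$, after extracting the factor $a\gamma_{m+1}$, the inner expression becomes $\gcd(\gamma_{m+1},\,a\gcd(2m+1,\gamma'_{m+1}))=\gcd(\gamma_{m+1},\,a(2m+1),\,a\gamma'_{m+1})$; since the third entry equals $\gcd(\gamma_{m+1},2m+1)$ and divides the second, the whole expression collapses to $\gcd(\gamma_{m+1},\,2m+1)$. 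Again $\Delta_3$ is a direct substitution.

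The main obstacle, modest but not entirely trivial, is the bookkeeping: one must pull factors of $a$ in and out of nested gcds and use the two recurrence identities to eliminate $\tau'_m$ and $\gamma'_{m+1}$ in favor of $\tau_m$, $\gamma_{m+1}$, $m$, and $2m+1$. Once these simplifications are recorded, the corollary follows immediately from Theorem 5.5.
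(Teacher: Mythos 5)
Your proposal is correct and takes essentially the same route as the paper: specialize Theorem 5.5 at $b=a$ and simplify the gcd's using the identities $2a\tau'_m=\tau_m-m$ and $2a\gamma'_{m+1}=\gamma_{m+1}-(2m+1)$ together with the oddness of $\gamma_{m+1}$ (the paper's proof records exactly these two identities and leaves the $\Delta_2$, $\Delta_3$ manipulations as ``similarly''). Your write-up actually supplies more of the gcd bookkeeping than the paper does, and all the individual reductions check out.
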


\begin{proof}
(i) Note that if $a=b$, then $\tau_m=2a\tau'_m+m$  and $\gamma_{m+1}=2a\gamma'_{m+1}+2m+1$, that is $gcd(\gamma_{m+1},a\gamma'_{m+1})=gcd(\gamma_{m+1}, 2m+1).$ So
$$gcd (\ gcd(2,a)\tau_{m}, \ 2a^2\tau'_m)=gcd (\ gcd(2,a)\tau_{m}, \ a(\tau_m-m))=gcd (\ gcd(2,a)\tau_{m}, am)$$
and $$gcd(\gamma_{m+1}, a^2\gamma'_{m+1})=gcd(\gamma_{m+1},a(2m+1)).$$
Similarly, we can simplify $\Delta_2$ and $\Delta_3$ to obtain the claimed expressions.
\end{proof}

\begin{cor}
Let $a>1$ be an integer. Then for the sandpile group of $R_n(a,1)$, we have
$$S(R_n(a,1))=\mathbb{Z}_{\Delta_1}\oplus \mathbb{Z}_{\frac{\Delta_2}{\Delta_1}}\oplus \mathbb{Z}_{\frac{\Delta_3}{\Delta_2}},$$
where 
\begin{align*}
\Delta_1&=
\begin{cases}
gcd (\tau_{m}, \ 2a\tau'_m), &\mbox{if } n=2m, \\
gcd (\gamma_{m+1}, \ a\gamma'_{m+1}), &\mbox{if } n=2m+1;
\end{cases}
\\
\Delta_2&=
\begin{cases}
\tau_{m}\cdot gcd(\tau_m+4m, \ 2ma), &\mbox{if } n=2m, \\
\gamma_{m+1}\cdot gcd(\gamma_{m+1}, a\gamma'_{m+1}), &\mbox{if } n=2m+1 \mbox{ is odd;}
\end{cases}
\\
\Delta_3&=
\begin{cases}
2ma(a+5)\tau_{m}^2, &\mbox{if } n=2m; \\
(2m+1)a\gamma_{m+1}^2, &\mbox{if } n=2m+1.
\end{cases}
\end{align*}
\end{cor}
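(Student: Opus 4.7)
The plan is to deduce Corollary 5.7 directly from Theorem 5.5 by specializing $b=1$ and then simplifying each determinantal divisor, in close analogy with the proof sketch given for Corollary 5.6. With $b=1$ one has $\gcd(a,b)=\gcd(2,a,b)=1$, $ab=a$, $a+b+4=a+5$, so $\Delta_3$ and the $\Delta_1$ formulas drop to the stated shapes immediately, and the work is concentrated in simplifying the inner $\gcd$ appearing in $\Delta_2$.

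First I will treat the odd case $n=2m+1$. After substitution,
\[
  \Delta_2 = \gamma_{m+1}\cdot \gcd\bigl(\gamma_{m+1},\,a\gcd(2m+1,\gamma'_{m+1})\bigr)
          = \gamma_{m+1}\cdot \gcd\bigl(\gamma_{m+1},\,a(2m+1),\,a\gamma'_{m+1}\bigr),
\]
using the elementary identity $a\gcd(y,z)=\gcd(ay,az)$. The key observation, using the relation $\gamma_{m+1}=(a+1)\gamma'_{m+1}+(2m+1)$ from Lemma~5.4, is that any common divisor $d$ of $\gamma_{m+1}$ and $a\gamma'_{m+1}$ also divides $a(2m+1)=a\gamma_{m+1}-a(a+1)\gamma'_{m+1}$. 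Hence the middle term is redundant and $\Delta_2=\gamma_{m+1}\cdot\gcd(\gamma_{m+1},a\gamma'_{m+1})$, as claimed.

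Second, for the even case $n=2m$, I will follow the same strategy but with two layers of gcd-reductions. Starting from
\[
  \Delta_2/\tau_m = \gcd\bigl((a+5)\tau_m,\,a\gcd(2m,(a+5)\tau'_m+m)\bigr) = \gcd\bigl((a+5)\tau_m,\,2ma,\,aX\bigr),
\]
where $X=(a+5)\tau'_m+m$, I will combine $\tau_m=(a+1)\tau'_m+m$ (Lemma~5.4 with $b=1$) with a short calculation giving the identity $(a+5)\tau_m = (a+1)X + 4m$. Reducing $(a+5)\tau_m$ modulo $aX$ replaces it by $X+4m$, and then reducing $aX$ modulo $a(X+4m)$ replaces it by $4am$, which is in turn absorbed by $2ma$. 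This brings the expression to $\gcd(X+4m,\,2ma)$, and the substitution $X+4m = (a+5)\tau'_m+5m$ followed by the elimination of the $(a+1)\tau'_m$ term via $\tau_m=(a+1)\tau'_m+m$ yields the stated form $\tau_m\cdot\gcd(\tau_m+4m,\,2ma)$.

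Finally, with $\Delta_1,\Delta_2,\Delta_3$ in hand, Theorem~2.3 immediately gives the invariant factor decomposition $S(R_n(a,1))\cong\mathbb{Z}_{\Delta_1}\oplus\mathbb{Z}_{\Delta_2/\Delta_1}\oplus\mathbb{Z}_{\Delta_3/\Delta_2}$. The main obstacle is the even-$n$ simplification of $\Delta_2$: the two identities $a\gcd(y,z)=\gcd(ay,az)$ and $\gcd(u,v)=\gcd(u,v-qu)$ must be applied several times in the right order, and one must be careful not to drop a factor of $a$ during the passage from $(a+5)\tau_m$ to $X+4m$. The odd case and the divisor $\Delta_1$ are essentially cosmetic substitutions, and $\Delta_3=nab(a+b+4)\tau_m^2$ (or $nab\gamma_{m+1}^2$) specializes directly with $n=2m$ or $n=2m+1$, $b=1$.
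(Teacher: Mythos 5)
Your specializations of $\Delta_1$ and $\Delta_3$, and your treatment of the odd case of $\Delta_2$, are correct and follow the route the paper intends (the paper gives no explicit proof of this corollary; it only carries out the analogous computation for $a=b$ in the preceding corollary). The problem is the last step of your even-case argument. You correctly reduce the inner gcd to $\gcd(X+4m,\,2ma)$ with $X=(a+5)\tau'_m+m$, so that
$$\Delta_2=\tau_m\cdot\gcd\bigl((a+5)\tau'_m+5m,\;2ma\bigr).$$
But the passage from here to $\tau_m\cdot\gcd(\tau_m+4m,\,2ma)$ is a non sequitur: since $\tau_m=(a+1)\tau'_m+m$, one has $(a+5)\tau'_m+5m=\tau_m+4\tau'_m+4m$, so the two arguments of the gcd differ by $4\tau'_m$, and nothing forces $2ma$ to divide $4\tau'_m$. ``Eliminating the $(a+1)\tau'_m$ term'' changes the residue class modulo $2ma$ and hence can change the gcd.

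In fact no argument can close this gap, because the stated even-case formula for $\Delta_2$ is false. Take $a=2$, $b=1$, $n=6$ (so $m=3$, $s=5$, $\tau_3=24$, $\tau'_3=7$). Your correct intermediate form gives $\Delta_2=24\cdot\gcd(64,12)=96$, which agrees with Theorem 5.5, namely $24\cdot\gcd\bigl(168,\,2\gcd(6,52)\bigr)=24\cdot4$, and with a direct computation of the $2\times2$ minors of $M'$. The printed formula gives $24\cdot\gcd(36,12)=288$, which is not even admissible as a second determinant divisor: with $\Delta_1=\gcd(24,28)=4$ and $\Delta_3=2ma(a+5)\tau_m^2=48384$ it would yield invariant factors $4,72,168$, and $72\nmid 168$. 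So the even-case statement should read $\Delta_2=\tau_m\cdot\gcd\bigl(\tau_m+4\tau'_m+4m,\;2ma\bigr)$; your derivation is right up to that point, and you should stop there rather than force the formula as printed.
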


\noindent
{\bf Remark 4.} Note that the dual of a polygon ring is just a generalized suspension of a cycle. So the results obtained above are all applicable to the generalized suspension of cycles since the sandpile groups of a connected planar graph and its dual are isomorphic \cite{Cori2000On}.

\section{The sandpile group of  $T_n(a,b)$ }

Similarly, for the relation matrix $T$  in Theorem 4.2, by first  adding $a+b$ times and $a$ times the first row to the second and the last row, respectively, we have

\begin{align*}
T &\sim\left(
  \begin{array}{ccc}
  a\rho_n+1&\tau_n& b\rho_n+1\\
  a\tau_{n}-(a-b)&(a+b)\rho_{n+1}+2&b\tau_{n}+a-b\\
  a\tau_n-ab\tau'_n-a\tau_{n-1}+a&a\rho_{n+1}-\tau_n+\tau_{n-1}& ab\tau'_{n}-b\tau_{n-1}+a
  \end{array}
  \right) \\
  &\sim\left(
  \begin{array}{ccc}
  a\rho_n+1&\tau_n& b\rho_n+1\\
  a\tau_{n}-(a-b)&(a+b)\rho_{n+1}+2&b\tau_{n}+a-b\\
  a-b-ab\tau'_n&-b\rho_{n+1}-1& ab\tau'_{n}-b\tau_{n}
  \end{array}
  \right) \\
  &\sim\left(
  \begin{array}{ccc}
  a\rho_n+1&\tau_n& b\rho_n+1\\
  a\tau_{n}-(a-b)&(a+b)\rho_{n+1}+2&b\tau_{n}+a-b\\
  a\tau_n-ab\tau'_n&a\rho_{n+1}+1& ab\tau'_{n}+a-b
 \end{array}
  \right)=T'.
\end{align*}

It is easy to see that
$$\Delta_1(T')=gcd(\tau_n,\  a\rho_n+1,\  ab\tau'_n,\ a-b);
\ \ \Delta_3(T')=nabs_n+\frac{4nab}{a+b}+\frac{(a-b)^2}{a+b}\tau_n.$$
But the expression for $\Delta_2(T')$ is cumbersome. So we only give the result for $a=b$. In this case
$$\aligned
&T'_{12;12}=-T'_{12;23}=as_n+2; \ \ T'_{12;13}=0; \\
&T'_{13;12}=a\cdot \frac{s_n-n\tau_n}{2}+1;\ \
 T'_{13;13}=-na(a\rho_n+1);\\
&T'_{13;23}=a\cdot\frac{s_n-n\tau_n}{2}-a s_n-1;\\ &T'_{23;12}=T'_{23;23}=-na(a\rho_{n+1}+1);\ \
T'_{23;13}=-na^2\tau_n.
\endaligned
$$
Hence
$$\begin{aligned}
\Delta_1&=gcd(\tau_n,\  a\rho_n+1, \ a^2\tau'_n);\\
\Delta_2&=gcd(na^2\tau_n,\ a s_n+2,\  na(a\rho_n+1),  \frac{as_n+2-na\tau_n}{2});\\
\Delta_3&=na(a s_n+2).
\end{aligned}$$
\begin{thm}
Let $a>0$ be an integer. Then for the sandpile group of $T_n(a,a)$, we have
$$S(T_n(a,a))=\mathbb{Z}_{\Delta_1}\oplus \mathbb{Z}_{\frac{\Delta_2}{\Delta_1}}\oplus \mathbb{Z}_{\frac{\Delta_3}{\Delta_2}},$$
where
$$\Delta_1=\left\{
\begin{aligned}
&gcd (\ \beta_{m}/2, \ am);\ \ \mbox{if  $n=2m$;} \\
&gcd (\ \delta_{m}, \ a(2m+1)),\ \ \mbox{if $n=2m+1$}.
\end{aligned}
\right.
$$
$$
\Delta_2=\left\{
\begin{aligned}
&\beta_{m}/2\cdot gcd (na, \ \beta_m,  \beta_{m}/2-na\tau_m);\ \ \mbox{if  $n=2m$;} \\
&\delta_m\cdot gcd (na, \frac{\beta_m+\beta_{m+1}}{2},\frac{\beta_m+\beta_{m+1}-2na\gamma_{m+1}}{4}),\ \  \mbox{if $n=2m+1$ is  odd;}
\end{aligned}
\right.
$$
and
$$\Delta_3=\left\{
\begin{aligned}
&ma\beta_{m}^2;\ \ \mbox{if  $n=2m$;} \\
&(2m+1)a\cdot \frac{\delta_m(\beta_m+\beta_{m+1})}{2},\ \  \mbox{if $n=2m+1$.}
\end{aligned}
\right.
$$
\end{thm}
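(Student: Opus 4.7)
The plan is to follow the same route used in the proof of Theorem~\ref{thm:5.5}: start from the three gcd formulas
\[
\Delta_1=\gcd(\tau_n,\,a\rho_n+1,\,a^2\tau'_n),\qquad \Delta_2=\gcd\bigl(na^2\tau_n,\,as_n+2,\,na(a\rho_n+1),\,\tfrac{as_n+2-na\tau_n}{2}\bigr),\qquad \Delta_3=na(as_n+2)
\]
that were already derived in the text immediately preceding the theorem (these come from the $2\times 2$ minors $T'_{ij;kl}$ of the relation matrix $T'$ specialized to $a=b$), and reduce each of them to the claimed closed form by splitting on the parity of $n$ and substituting the identities of Lemma~5.1. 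The output is then fed into Theorem~\ref{th:1}(2) to conclude $S(T_n(a,a))\cong\mathbb{Z}_{\Delta_1}\oplus\mathbb{Z}_{\Delta_2/\Delta_1}\oplus\mathbb{Z}_{\Delta_3/\Delta_2}$.

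First I treat the even case $n=2m$. I substitute $\tau_n=\tau_m\beta_m$, $\tau_{n-1}+1=\tau_m\beta_{m-1}$, $\rho_n=\tau_m\gamma_m$, $s_n=(2a+4)\tau_m^2$, and (using the general integrality argument behind Lemma~5.3 combined with $\tau_n=2a\tau'_n+n$ when $a=b$) rewrite $\tau'_n=\tau_m\beta'_m+2\tau'_m$. The key algebraic identity here is $\beta_m^2-4a(a+2)\tau_m^2=4$, which is immediate from $\lambda_1\lambda_2=1$, since it gives $\beta_m^2-t^2\tau_m^2=4\lambda_1^m\lambda_2^m=4$ and $t^2=s^2-4=4a(a+2)$. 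This identity lets me rewrite $\Delta_3=na(as_n+2)=2ma\bigl(a(2a+4)\tau_m^2+2\bigr)=ma\beta_m^2$ at once. For $\Delta_1$ and $\Delta_2$ I exploit $\gcd(\tau_m,a\tau_m\gamma_m+1)=1$ to peel off $\tau_m$ from the first entry, and use $\gcd(\beta_m,\gamma_m)=1$ (Lemma~5.2 together with parity of $\gamma_m$) to collapse the remaining gcd to the stated form $\gcd(\beta_m/2,am)$ and $(\beta_m/2)\cdot\gcd(na,\beta_m,\beta_m/2-na\tau_m)$. Parity has to be watched: the term $\beta_m/2$ is an integer because $\beta_m\equiv 2\pmod{2a}$ forces $\beta_m$ even, and a parallel check shows $as_n+2-na\tau_n$ is divisible by $2$.

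The odd case $n=2m+1$ proceeds in the same fashion with $\tau_n=\gamma_{m+1}\delta_m$, $\rho_n=\tau_m\gamma_{m+1}$, $s_n=\gamma_{m+1}^2$. The main new identity needed here is
\[
a\gamma_{m+1}^2+2=\delta_m\cdot\frac{\beta_m+\beta_{m+1}}{2},
\]
which gives $\Delta_3=(2m+1)a\cdot\delta_m(\beta_m+\beta_{m+1})/2$ directly. I would prove this identity by substituting the closed forms for $\gamma_{m+1},\delta_m,\beta_m,\beta_{m+1}$ in terms of $\lambda_1,\lambda_2$, using $\lambda_1\lambda_2=1$, and collecting. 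One also needs that $(\beta_m+\beta_{m+1})/2$ and $(\beta_m+\beta_{m+1}-2na\gamma_{m+1})/4$ are integers, which follows because $\beta_m+\beta_{m+1}$ satisfies the same linear recurrence with both initial values even, and a mod-$4$ check on $\beta_m+\beta_{m+1}-2na\gamma_{m+1}$ using $\gamma_{m+1}$ odd. Using Lemma~5.2 ($\gcd(\delta_m,\delta_{m-1})=\gcd(\gamma_m,\gamma_{m-1})=1$) then collapses $\Delta_1$ and $\Delta_2$ to the claimed forms.

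The main obstacle I anticipate is not the even case (which is essentially $\beta_m^2-4a(a+2)\tau_m^2=4$ plus straightforward gcd bookkeeping), but the odd case: verifying the identity $a\gamma_{m+1}^2+2=\delta_m(\beta_m+\beta_{m+1})/2$ and checking the various divisibilities that turn $\Delta_2$ into the compact form $\delta_m\cdot\gcd\bigl(na,(\beta_m+\beta_{m+1})/2,(\beta_m+\beta_{m+1}-2na\gamma_{m+1})/4\bigr)$ requires careful manipulation of the four parallel sequences $\tau,\beta,\gamma,\delta$ at indices $m$ and $m+1$. Once those pieces are in place, the rest is a mechanical rewriting of gcd's followed by an appeal to Theorem~\ref{th:1}.
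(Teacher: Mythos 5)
Your proposal is correct and follows essentially the same route as the paper: the paper derives exactly the raw expressions $\Delta_1=\gcd(\tau_n,a\rho_n+1,a^2\tau'_n)$, $\Delta_2$, $\Delta_3=na(as_n+2)$ from the minors of $T'$ in the text immediately preceding the theorem, and the remaining step (left implicit there, in parallel with the proof of Theorem~\ref{thm:5.5}) is precisely your parity split using Lemma~5.1 together with the identities $\beta_m^2-4a(a+2)\tau_m^2=4$ and $a\gamma_{m+1}^2+2=\delta_m(\beta_m+\beta_{m+1})/2$, both of which are correct consequences of $\lambda_1\lambda_2=1$.
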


\noindent
{\bf Remark 5.} Note that the polygon ring is obtained by identifying two free edges of a polygon chain. It is known that any free edge can generate the sandpile group of the polygon chain. So we wonder whether the result we obtained above may hold in a more general setting. That is, let $H$ be a plane graph with cyclic sandpile group, and $e$ and $f$ be two generating edges on the outer face. Let $G$ be the graph obtained from $H$ by identifying $e$ and $f$. Does it follow that the sandpile group of $G$ is the direct sum of at most three cyclic groups?

\bibliographystyle{abbrv}

\bibliography{polygonflower1}

\end{document}